\definecolor{mygreen}{RGB}{28,172,0} 
\definecolor{mylilas}{RGB}{170,55,241}
\newcommand{\RR}{\mathbb{R}}
\newcommand{\NN}{\mathbb{N}}
\newcommand{\diag}{\text{diag}}
\newcommand{\ST}{\quad\text{s.t.}\quad}
\newcommand{\st}{\text{s.t.}}
\newcommand*{\norm}[1]{\left\Vert#1\right\Vert}
\newtheorem{prop}{Proposition}[section]
\newtheorem{thm}[prop]{Theorem}
\newtheorem{defn}[prop]{Definition}
\newtheorem{lem}[prop]{Lemma}
\theoremstyle{definition}
\DeclareMathAlphabet{\pazocal}{OMS}{zplm}{m}{n}
\begin{document}

\title{\bfseries\scshape The Sparse(st) Optimization Problem: Reformulations, Optimality, Stationarity, and
Numerical Results}

\date{October 6, 2022}

\author{Christian Kanzow\thanks{University of Würzburg, Institute of Mathematics, Campus Hubland Nord,
         Emil-Fischer-Str.\ 30, 97074 Würzburg, Germany; kanzow@mathematik.uni-wuerzburg.de} \and
         Alexandra Schwartz\thanks{Technical University of Dresden,
         Faculty of Mathematics, Zellescher Weg 12--14,  01069 Dresden, Germany;
     	 alexandra.schwartz@tu-dresden.de} \and 
         Felix Wei{\ss}\thanks{University of Würzburg, Institute of Mathematics, Campus Hubland Nord,
         Emil-Fischer-Str.\ 30, 97074 Würzburg, Germany; felix.weiss@mathematik.uni-wuerzburg.de}}

\maketitle

{
\noindent
\small\textbf{\abstractname.}
   We consider the sparse optimization problem with nonlinear constraints and an objective function, which is given by the sum of a general smooth mapping and an additional term defined by the $ \ell_0 $-quasi-norm.
   This term is used to obtain sparse solutions, but difficult to handle due to its nonconvexity and nonsmoothness (the sparsity-improving term is even discontinuous).
   The aim of this paper is to present two reformulations of this program as a smooth nonlinear program with complementarity-type constraints.
   We show that these programs are equivalent in terms of local and global minima and introduce a problem-tailored
   stationarity concept, which turns out to coincide with the standard KKT conditions of the two reformulated
   problems.
   In addition, a suitable constraint qualification as well as second-order conditions for the sparse optimization problem are investigated.
   These are then used to show that three Lagrange-Newton-type methods are locally fast convergent.
   Numerical results on different classes of test problems indicate that these methods can be used to drastically improve sparse solutions obtained by some other (globally convergent) methods for sparse optimization problems.
\par\addvspace{\baselineskip}
}

{
\noindent
\small\textbf{Keywords.}
   Sparse optimization; global minima; local minima;
   strong stationarity; Lagrange-Newton method; 
   quadratic convergence;
   B-subdifferential.
\par\addvspace{\baselineskip}
}


\section{Introduction}

The sparse(st) optimization problem considered in this paper is the constrained problem
\begin{equation}\label{SparseOpt}
   \min_x f(x) + \rho \lVert x \rVert_0 \quad \text{s.t.} \quad x \in X \tag{SPO},
\end{equation}
with a parameter $\rho>0$, a feasible set $X$ (usually) given by 
\begin{equation*}\label{FeasSet}
   X = \{ x \in \RR^n  \mid g(x) \le 0, \; h(x) = 0\}    
\end{equation*}
with (at least) continuous functions $f: \RR^n \to \RR $, $g: \RR^n \to \RR^m $, $h: \RR^n \to \RR^p$ and $ \lVert x \rVert_0 $ being the number of nonzero components $ x_i $ of the vector $ x $.
Following standard terminology, we call $ \lVert x \rVert_0 $ the $ \ell_0 $-norm throughout this manuscript though it is not a norm. 
Typical applications, where sparse solutions of a given optimization problem are required, include
compressed sensing for sparse representation of signals or image data, sparse portfolio selection problems, feature selection in classification learning, sparse regression or the sparse principal component analysis, see \cite[Section 2]{Tillmann-et-al-2021} for an overview and references.

Following \cite{LeThi-PhamDinh-Le-Vo-2014}, the solution methods for problems like \ref{SparseOpt} can be divided into the following three categories: (a) convex approximations, (b) nonconvex approximations, and (c) nonconvex exact
reformulations.

The most common convex approximation technique uses the $ \ell_1 $-norm instead of the $ \ell_0 $-norm in \ref{SparseOpt}.
An overview on such $\ell_1$-surrogate models, their advantages ans solution approaches can be found in \cite[Section 4.1]{Tillmann-et-al-2021}. 
Provided that $ f $ and $ X $ themselves are convex, the resulting optimization problem is convex (though
nonsmooth) and can therefore be solved by a variety of methods for convex optimization, see \cite{Beck2017}.
This approach is very popular, for example, in solving compressed sensing problems.
On the other hand, there exist prominent applications, where the $ \ell_1 $-norm provides absolutely no sparsity (like the portfolio optimization problem used in our numerical section).

This drawback leads to other sparsity improving terms that result in nonconvex approximation schemes.
A natural choice is to use the $ \ell_p $-quasi-norm for some $ p \in (0,1) $, which is no longer convex,
but still continuous, see \cite{Ghilli2017}.
Despite its nonconvexity, if there are no constraints (i.e., $ X = \RR^n $), the resulting problem can still be
solved relatively efficiently by a proximal-type method.
For additional constraints, one can apply an augmented Lagrangian-type method and use the proximal-type approach to solve the resulting (unconstrained) subproblems, see \cite{Chen2017,DeMarchi2022}.
In principle, these techniques can also be used for the $ \ell_0 $-norm, but the discontinuity still causes some trouble and typically leads to slowly convergent (proximal-type) gradient methods, see \cite{DeMarchi2022}.
Another method belonging to the class of nonconvex approximations is the penalty decomposition method \cite{Lu-Zhang-2012}, which introduces an additional variable and solves the resulting problem
by an alternating minimization technique.
Also the DC-type methods (DC = difference of convex) described in \cite{LeThi-PhamDinh-Le-Vo-2014} result in a nonconvex 
approximation which is shown to be exact under some additional assumptions, see also the DC-reformulation of the 
$ \ell_0 $-norm from \cite{Gotoh-Takeda-Tono-2018} (this reformulation, however, is applied to cardinality-constrained problems where the $ \ell_0 $-term is not in the objective function but in
the constraints, see below for a more detailed discussion).

Finally, regarding the class (c) of exact nonconvex reformulations, there are, to the best of our knowledge, still just a very few papers providing such reformulations.
A natural choice is to use a mixed-integer program, cf.\ reformulation \ref{MIP}.
This is useful for finding sparse solutions of -- often quadratic -- problems, whose dimension is not too large, and allows, in principle, to compute a global minimum, see e.g. \cite{ben2021global}.
By modifying the objective function with a suitable regularizing term, c.f. \cite{bertsimas2020sparse}, also larger problem dimensions can be handled. 
For nonlinear programs or large-scale problems, however, this typically leads to an intractable reformulation.
One alternative approach is the complementarity-type reformulation suggested in \cite{l0}, which can be shown to be 
completely equivalent to the original sparse optimization problem \ref{SparseOpt}.
The focus of the paper \cite{l0}, however, is slightly different.

More precisely, in this paper, we present two reformulations of the general sparse optimization problem \ref{SparseOpt}.
These reformulations are introduced in  Section~\ref{Sec:Reformulation}, and partially motivated by a related approach from \cite{CCSPO,CKS16} for cardinality-constrained optimization problems, cf.\ the corresponding discussion in Section~\ref{Sec:Reformulation}.
One of the two reformulations is exactly the one from \cite{l0} that we already mentioned previously.
Note that the subsequent results shown for our two reformulated problems are even new for the
approach from \cite{l0}.
In particular, we verify in Section~\ref{Sec:Properties} that problem \ref{SparseOpt} and our two reformulations
are equivalent in terms of both local and global minima.
Section~\ref{Sec:Properties} introduces a problem-tailored strong stationarity concept and a corresponding
constraint qualification and shows that these correspond to the standard KKT conditions and a standard constraint qualification of the two reformulated problems.
We then discuss suitably adapted second-order conditions in Section~\ref{Sec:SecondOrder}.

Though the main goal of this paper is to lay the foundations of two exact nonconvex reformulations of the sparse optimization problem \ref{SparseOpt}, the corresponding discussion leads, in a very natural way, to Lagrange-Newton-type methods for the solution  of \ref{SparseOpt}, see Section~\ref{Sec:LagrangeNewton}.
Like all Newton-type methods, this is primarily a locally (fast) convergent algorithm, whereas a central difficulty for 
the solution of sparse optimization problems is to design suitable globally convergent methods.
Nevertheless, the corresponding numerical results in Section~\ref{Sec:Numerics} indicate that the Lagrange-Newton-type methods can be used to obtain significant improvements over solutions calculated by other (globally convergent) sparse solvers.
We close with some final remarks in Section~\ref{Sec:Final}.

Notation: Throughout this manuscript, $ e_i \in \RR^n $ denotes the $ i $-th unit vector, whereas $ e := (1, \ldots, 1)^T \in \RR^n $ is the all-one vector. 
Given $ x \in \RR^n $ and $ x^* \in X $, we define the index sets
$$
   I_0(x) := \{ i \ | \ x_i = 0 \} \quad \text{and} \quad
   I_g(x^*) := \{ i \ | \ g_i(x^*) = 0 \}
$$
of zero components of $ x $ and active inequality constraints at $ x^* $, respectively.
For an arbitrary vector $ x $, we write $ \diag(x) $ for the corresponding diagonal matrix, whose diagonal entries are given by the elements of $ x $.
Given two vectors $ x, y \in \RR^n $, the Hadamard (elementwise) product is denoted by $ x \circ y $, 
i.e., the elements of this vector are given by $ x_i \cdot y_i $ for all $ i = 1, \ldots, n $.

\section{Two Smooth Reformulations of SPO}\label{Sec:Reformulation}

In this section we derive two smooth reformulations of \ref{SparseOpt} and show that the local and global minima of these reformulated problems coincide with the local and global minima of the original sparse optimization problem \ref{SparseOpt}.
One of these reformulations is already known from \cite{l0}, whereas the other one is new and will be more suited
for our numerical experiments later on.
Note that the results stated in this manuscript for the known formulation from \cite{l0} are still new and not contained in that reference.
Throughout this section, we only require $ f, g, h $ to be continuous.

Let us consider the sparse optimization problem from \ref{SparseOpt} with an arbitrary set $ X \subseteq \RR^n $.
For any $x \in \RR^n$, define a corresponding binary variable $y \in \{0,1\}^n$ by setting $ y_i := 0 $ for $ x_i \neq 0 $ and $ y_i := 1 $ for $ x_i = 0 $.
Using this $y$, we can calculate the $ \ell_0 $-norm of $x$ as
$$
   \|x\|_0 = \sum_{x_i \neq 0}1 = \sum_{i=1}^n (1-y_i) = n-e^Ty.
$$
Thus, we could rewrite problem \ref{SparseOpt} by the following mixed-integer problem
\begin{equation}\label{MIP}\tag{MIP}
   \min_{x,y} f(x) + \rho(n-e^Ty) \ST  x \in X, \; x \circ y=0, \;  y \in \{0,1\}^n.
\end{equation}
In order to move to a continuous optimization problem, we discard the binary constraints on $y$.
We need to retain the constraint $y \leq e$, because otherwise the objective function of \eqref{MIP} does not admit a minimum.
This leads us to the reformulation
\begin{equation}\label{PropPap}
   \min_{(x,y)} f(x) + \rho (n-e^Ty) \ST x \in X, \; x \circ y = 0, \; y \leq e.
   \tag{SPOlin}
\end{equation}
Since the auxiliary variable $ y $ enters the objective function linearly, we denote this
problem \ref{PropPap}. This is in contrast to our second formulation
\begin{equation}\label{ProbNew}
   \min_{(x,y)} f(x) + \frac{\rho}{2} \sum_{i=1}^n y_i (y_i - 2) \ST x \in X, \; x \circ y =0 
   \tag{SPOsq}
\end{equation}
called \ref{ProbNew}, since we add a quadratic term to the objective function.
Note that this quadratic term is designed in such a way that it vanishes, whenever $ x_i \neq 0 $ (due to the complementarity-type constraint), and that it attains its minimum at $ y_i = 1 $ whenever this variable is unconstrained, i.e., for all  $ i $ with $ x_i = 0 $., see Figure~\ref{fig:comparison}.

\begin{figure}[htbp]
   \centering
   \begin{tikzpicture}
      \draw[->] (-1.5,0) -- (-1,0) node[below]{-1} -- (0,0) node[below left]{0} -- (1,0) node[below]{1} -- (2,0) node[below]{2} -- (2.5,0) node[right]{$y_i$};
      \draw[->] (0,-1.5) -- (0,-1) node[left]{-1} -- (0,1) node[left]{1} -- (0,1.5);
      \draw[blue, very thick] (-1,1) -- (1.25,-1.25) node[right]{$-y_i$};
      \draw[dashed] (1,-1.5) -- (1,1.5);
   \end{tikzpicture}
   \qquad
   \begin{tikzpicture}
      \draw[->] (-1.5,0) -- (-1,0) node[below]{-1} -- (0,0) node[below left]{0} -- (1,0) node[below]{1} -- (2,0) node[below]{2} -- (2.5,0) node[right]{$y_i$};
      \draw[->] (0,-1.5) -- (0,-1) node[left]{-1} -- (0,1) node[left]{1} -- (0,1.5);
      \draw[-, blue, very thick, domain= -0.5:2.25, smooth, variable=\y] plot ({\y}, {\y * (\y-2)}) node[above]{$y_i (y_i - 2)$};
   \end{tikzpicture}
   \caption{Comparison of the terms $-y_i$ used in \ref{PropPap} and $y_i (y_i - 2)$ used in \ref{ProbNew}}\label{fig:comparison}
\end{figure}
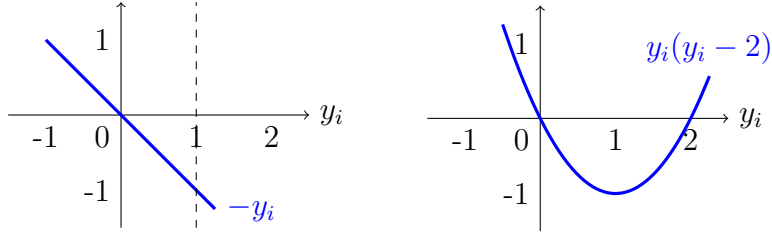

Problem \ref{PropPap} corresponds to the reformulation already introduced in \cite{l0}, whereas \ref{ProbNew} seems to be new.
Observe that, if the feasible set $ X $ contains no inequality constraints, then the new formulation \ref{ProbNew} boils down to an equality-constrained optimization problem, in contrast to \ref{PropPap}, which still includes the inequalities $ y \leq e $.
This observation is particularly useful in our setting since, later, we will apply a Lagrange-Newton-type method in order to solve the sparse optimization problem.

Before we take a closer look at the relaxed problems \ref{PropPap} and \ref{ProbNew}, we would like to briefly discuss the relation of the sparse problem \ref{SparseOpt} and its relaxations to the two closely related problem classes of \emph{cardinality-constrained problems}
\begin{equation*}
   \min_x \ f(x) \ST x \in X, \; \|x\|_0 \leq \kappa
\end{equation*}
and \emph{cardinality minimization problems}
\begin{equation*}
   \min_x \ \|x\|_0  \ST x \in X, \; f(x) \leq \delta,
\end{equation*}
where $\kappa \in \NN$ and $\delta \in \RR$ are given constants.
Using the same ideas as above, these problems can be relaxed to the continuous problems
\begin{eqnarray*}
   \min_{x,y} \ f(x) & \st & x \in X, \; x \circ y = 0, \; y \leq e, \; n - e^Ty \leq \kappa, \\
   \min_{x,y} \ n - e^Ty & \st & x \in X, \; x \circ y = 0, \; y \leq e, \; f(x) \leq \delta,
\end{eqnarray*}
respectively.
As we show below, for problem \ref{SparseOpt} the two relaxations are equivalent to the original problem in terms of global and local minima.
Using the same arguments, it is also possible to show this equivalence for the cardinality minimization problem.
However, for the cardinality-constrained problem it is known, see \cite{CCSPO}, that only the global minima of the original problem and its relaxation coincide, but the relaxation may have additional local minima.

Furthermore, one may be tempted to view problem \ref{SparseOpt} as a penalty reformulation of either of the other two problems.
However, while a solution $x^*$ of \ref{SparseOpt} always is a solution of the other two problems with $\kappa := \|x^*\|_0$ or $\delta := f(x^*)$, respectively, the opposite implication is in general not true.
This means that solutions of the cardinality-constrained problem or cardinality minimization problem cannot always be recovered as solutions of \ref{SparseOpt}.
More details on these relations can be found in \cite[Proposition 1.1]{Tillmann-et-al-2021}.

\section{Properties of Reformulations}\label{Sec:Properties}

In the moment, it is not clear why we can view the programs \ref{PropPap} and \ref{ProbNew} as reformulations of the given nonsmooth and discontinuous sparse optimization problem \ref{SparseOpt}.
But, as we show below, these three programs are completely equivalent in terms of both global and local
minima and even their corresponding stationary points coincide. 

In order to verify these statements, we first need some preliminary results.
Note that $ x $ is obviously feasible for the given problem \ref{SparseOpt} if and only if there exists a suitable vector $ y \in \RR^n $ such that $ (x,y) $ is feasible for \ref{PropPap} or \ref{ProbNew}.
Furthermore, we have the following relations for feasible points of these two programs.

\begin{lem}\label{0normprop}
   The following statements hold:
   \begin{enumerate}[label = (\roman*)]
      \item Let $(x,y)$ be feasible for \ref{PropPap}. Then
      $ \norm{x}_0 \le n - e^T y, $
      with equality if and only if $y_i = 1$ for all $i \in I_0(x)$.
      \item Let $(x,y)$ be feasible for \ref{ProbNew}. Then
      $ \norm{x}_0 - n \le \sum_{i=1}^n y_i (y_i - 2) , $
      with equality if and only if $y_i = 1$ for all $i \in I_0(x)$.
   \end{enumerate}
\end{lem}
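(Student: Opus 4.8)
The plan is to prove both inequalities by exploiting the complementarity constraint $x \circ y = 0$, which forces a clean split of the index set $\{1, \ldots, n\}$ according to whether a component of $x$ vanishes. The decisive observation is that for every index $i \notin I_0(x)$, i.e.\ with $x_i \neq 0$, the constraint $x_i y_i = 0$ immediately yields $y_i = 0$. Hence on the complement of $I_0(x)$ the variable $y$ is completely determined, so that the contribution of these indices to the relevant sum can be computed exactly, while the contribution of the indices in $I_0(x)$ will only be bounded from below by zero.

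For part (i) I would first rewrite $n - e^T y = \sum_{i=1}^n (1 - y_i)$ and split this sum over $I_0(x)$ and its complement. On the complement, $y_i = 0$ gives $\sum_{i \notin I_0(x)} (1 - y_i) = |\{i : x_i \neq 0\}| = \norm{x}_0$. On $I_0(x)$, the constraint $y \leq e$ guarantees $1 - y_i \geq 0$, so the remaining sum is nonnegative; adding the two pieces yields $n - e^T y \geq \norm{x}_0$. Since every summand $1 - y_i$ with $i \in I_0(x)$ is nonnegative, the sum $\sum_{i \in I_0(x)}(1 - y_i)$ vanishes precisely when each such term is zero, which is exactly the condition $y_i = 1$ for all $i \in I_0(x)$; this characterizes the equality case.

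For part (ii) I would use the algebraic identity $y_i(y_i - 2) = (y_i - 1)^2 - 1$, so that $\sum_{i=1}^n y_i (y_i - 2) + n = \sum_{i=1}^n (y_i - 1)^2$. The same index split applies: for $i \notin I_0(x)$ we again have $y_i = 0$, hence $(y_i - 1)^2 = 1$ and these indices contribute exactly $\norm{x}_0$; for $i \in I_0(x)$ each term $(y_i - 1)^2$ is trivially nonnegative. Summing gives $\sum_{i=1}^n y_i (y_i - 2) + n \geq \norm{x}_0$, which is the claimed inequality, and equality holds if and only if $(y_i - 1)^2 = 0$ for all $i \in I_0(x)$, i.e.\ $y_i = 1$ there.

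There is no genuinely hard step here; the only point requiring care is recognizing that in part (i) the lower bound on the $I_0(x)$-terms relies on the explicit constraint $y \leq e$, whereas in part (ii) the square $(y_i - 1)^2$ is automatically nonnegative and no such constraint is needed. This asymmetry is consistent with the fact that \ref{ProbNew} does not impose $y \leq e$, and it is worth stating explicitly so that the reader sees why the quadratic reformulation can dispense with that inequality.
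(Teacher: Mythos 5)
Your proof is correct and takes essentially the same route as the paper's: both use the complementarity constraint $x \circ y = 0$ to force $y_i = 0$ for all $i \notin I_0(x)$, then bound the remaining terms over $I_0(x)$ via the constraint $y \le e$ in part (i) and via the fact that $y_i(y_i-2)$ has minimum value $-1$ attained exactly at $y_i = 1$ in part (ii) — your completed square $(y_i-1)^2 - 1$ is precisely this fact made explicit. The equality characterizations are argued identically, so there is nothing to add.
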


\begin{proof}
   (i) The definition of the index set $ I_0(x) $ and the assumed feasibility of $ (x,y) $ implies
   \[
      n - e^T y = n - \sum_{i \in I_0(x)} y_i - \sum_{i \notin I_0(x)} y_i
      = n - \sum_{i \in I_0(x)} y_i
      \ge n -     \sum_{i \in I_0(x)} 1 = \norm{x}_0.
   \]
   This also shows that equality holds if and only if $ y_i = 1$ for all $i \in I_0(x)$. \smallskip

   \noindent
   (ii) Recall that the function $ y_i \mapsto y_i(y_i-2) $ attains its (unique) minimum at $ y_i = 1 $
   with corresponding minimal function value $ - 1 $. The definition of the index set $ I_0(x)$
   and the feasibility of $ (x,y) $ therefore yield
   \[
      \sum_{i=1}^n y_i (y_i - 2) = \sum_{i \in I_0(x)} y_i ( y_i - 2) \ge 
      \sum_{i \in I_0(x)} -1 = \Big( n - \sum_{i \in I_0(x)} 1 \Big) - n = \norm{x}_0 -n,
   \]
   and equality holds if and only if $y_i = 1$ for all $i \in I_0(x)$.
\end{proof}

\noindent
The following result shows that the constellation $y_i = 1$ for $i \in I_0(x)$ is indeed the most preferable one.

\begin{lem}\label{locmin}
   Let $(x^*,y^*)$ be a local minimum of \ref{PropPap} or \ref{ProbNew}.
   Then we have $y_i^* = 1$ for all $i \in I_0(x^*)$.
\end{lem}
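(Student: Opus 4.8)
The goal is to show that at any local minimum $(x^*, y^*)$ of \ref{PropPap} or \ref{ProbNew}, the auxiliary variable satisfies $y_i^* = 1$ for every index $i \in I_0(x^*)$. The natural strategy is a contradiction argument via an explicit feasible perturbation: assuming some $y_j^* \neq 1$ with $j \in I_0(x^*)$, I would construct a nearby feasible point with strictly smaller objective value, contradicting local minimality. The key structural observation, already encoded in Lemma~\ref{0normprop}, is that the components $y_i$ with $i \in I_0(x^*)$ are the only ones that are genuinely ``free'': for $i \notin I_0(x^*)$ we have $x_i^* \neq 0$, so the complementarity constraint $x_i^* y_i = 0$ forces $y_i^* = 0$ and pins that component down, whereas for $i \in I_0(x^*)$ the constraint $x_i^* y_i = 0$ holds automatically for \emph{any} value of $y_i$. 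This means I can move $y_j$ for $j \in I_0(x^*)$ without affecting feasibility of the complementarity constraint or the constraint $x \in X$ (since $x$ itself is held fixed).

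\emph{First}, I would fix the offending index $j \in I_0(x^*)$ with $y_j^* \neq 1$ and consider the perturbed point $(x^*, \tilde y)$ that agrees with $(x^*, y^*)$ in every coordinate except the $j$-th, where $\tilde y_j$ is moved toward $1$. \emph{Second}, I would check feasibility: $x^*$ is unchanged so $x^* \in X$ persists; the complementarity constraint $x^* \circ \tilde y = 0$ still holds because $x_j^* = 0$; and for \ref{PropPap} the bound $\tilde y_j \le 1$ is respected as long as we move $\tilde y_j$ up toward $1$ (if $y_j^* < 1$) or it is already satisfied trivially. \emph{Third}, I would compare objective values. In the \ref{PropPap} case the $y$-dependent part is $-\rho e^T y$, which strictly decreases when we increase $y_j$ toward $1$; so if $y_j^* < 1$ this immediately gives a smaller value in every neighborhood, contradicting local minimality. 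In the \ref{ProbNew} case the relevant term is $\tfrac{\rho}{2} y_j(y_j - 2)$, whose unique minimizer over $\RR$ is $y_j = 1$ (as recalled in the proof of Lemma~\ref{0normprop}); hence any $y_j^* \neq 1$ can be improved by moving $\tilde y_j$ strictly closer to $1$, again contradicting local minimality.

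\emph{The main subtlety} lies in the \ref{PropPap} case, because there the constraint $y_j \le 1$ is one-sided: the argument above only rules out $y_j^* < 1$, and feasibility already forbids $y_j^* > 1$, so together these force $y_j^* = 1$. I would make sure to handle this boundary structure cleanly rather than invoking an unconstrained minimization argument as in the quadratic case. For \ref{ProbNew} there is no such bound, so the pure ``move toward the unconstrained minimizer $y_j = 1$'' argument applies directly in both directions. In either case the perturbations can be taken arbitrarily small, so the strict decrease occurs within every neighborhood of $(x^*, y^*)$, which is exactly what is needed to contradict that $(x^*, y^*)$ is a local minimum. I expect the only real care to be in verifying that the perturbation stays feasible and that the decrease is strict, both of which follow from the one-variable behavior of the respective objective terms.
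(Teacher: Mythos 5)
Your proof is correct and takes essentially the same route as the paper: the paper also fixes $x = x^*$ and exploits the fact that, on $I_0(x^*)$, the $y$-components are free (up to $y \le e$ in \ref{PropPap}) and the separable $y$-terms are optimized at $y_i = 1$, phrasing this as ``$y^*$ solves the reduced problem in $y$.'' Your single-coordinate perturbation toward $1$ is simply the explicit contradiction version of that observation, spelling out the strict-decrease step the paper leaves implicit.
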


\begin{proof}
   Let $(x^*,y^*)$ be a local minimum of \ref{PropPap}.
   We can fix $x=x^*$ and know that $y^*$ solves
   \[ 
      \max_y\ e^T y \ST  y_i =0, \ i\notin I_0(x^*), \; y \le e .
   \]
   Similarly, let $(x^*,y^*)$ be a local minimum of \ref{ProbNew}.
   We can fix $x=x^*$ and know that $y^*$ solves
   \[ 
      \min_y \ \sum_{i=1}^n y_i (y_i - 2) \ST  y_i=0, \ i\notin I_0(x^*).
   \]
   In both cases the statement follows.
\end{proof}

\noindent
Next, we show that the set of local minima of the sparse optimization problem \ref{SparseOpt} is independent of the particular choice of the penalty parameter.
Note that, this is due to the discontinuity of the $ \ell_0 $-norm and that a similar result for sparse optimization problems involving the $ \ell_1 $-norm, e.g., does not hold.
This observation may actually be viewed as an advantage of the $ \ell_0 $-norm, since this implies that a suitable choice
of the penalty parameter is much less critical for the $ \ell_0 $-formulation of the sparse optimization problem than other (continuous) formulations like the one based on the $ \ell_1 $-norm or the $ \ell_q $-quasi-norm for $ q \in (0,1) $.

\begin{prop}\label{Prop:PenIndependent}
   Let $ x^* $ be a local minimum of \ref{SparseOpt} with penalty parameter $ \rho_1 > 0 $.
   Then $ x^* $ is also a local minimum of \ref{SparseOpt} for any other penalty parameter $ \rho_2> 0 $.
\end{prop}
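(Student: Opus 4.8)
The plan is to exploit the discontinuity of the $\ell_0$-norm together with the continuity of $f$, and to show that the actual value of the penalty parameter is irrelevant in two complementary regimes around $x^*$. Writing local minimality of $x^*$ for $\rho_1$ as the existence of a neighborhood $U$ with $f(x^*) + \rho_1\norm{x^*}_0 \le f(x) + \rho_1\norm{x}_0$ for all $x \in U \cap X$, I want to produce a (possibly smaller) neighborhood on which the analogous inequality holds with $\rho_2$ in place of $\rho_1$.

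First I would record the crucial local property of the $\ell_0$-norm: since every index $i$ with $x^*_i \neq 0$ satisfies $x_i \neq 0$ for all $x$ in a sufficiently small neighborhood $W$ of $x^*$, we have $I_0(x) \subseteq I_0(x^*)$ and hence $\norm{x}_0 \ge \norm{x^*}_0$ throughout $W$, with equality precisely when $I_0(x) = I_0(x^*)$. In particular, $\norm{x}_0 - \norm{x^*}_0$ is a nonnegative integer on $W$. Using continuity of $f$ at $x^*$, I would then shrink $W$ so that additionally $f(x^*) - f(x) < \rho_2$ for all $x \in W$, and set $N := U \cap W \cap X$.

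For $x \in N$ I split into two cases. If $\norm{x}_0 = \norm{x^*}_0$, then the $\rho_1$-term cancels in the $\rho_1$-minimality inequality and yields $f(x^*) \le f(x)$; adding $\rho_2\norm{x^*}_0 = \rho_2\norm{x}_0$ to both sides gives the desired inequality for $\rho_2$, and the value $\rho_1$ plays no role. If instead $\norm{x}_0 \ge \norm{x^*}_0 + 1$, then by the choice of $W$ one has $f(x^*) - f(x) < \rho_2 \le \rho_2(\norm{x}_0 - \norm{x^*}_0)$, which rearranges to $f(x^*) + \rho_2\norm{x^*}_0 < f(x) + \rho_2\norm{x}_0$; here only continuity and the integer jump are used. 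In both cases $x^*$ beats $x$ for the parameter $\rho_2$, establishing the claim.

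The main obstacle to anticipate is that a naive monotonicity argument, bounding $\rho_2(\norm{x}_0-\norm{x^*}_0)$ from below by $\rho_1(\norm{x}_0-\norm{x^*}_0)$, only works when $\rho_2 \ge \rho_1$ and fails for $\rho_2 < \rho_1$. The resolution is exactly the case distinction above: on the set where the support of $x^*$ is preserved the penalty contributes nothing to the comparison, while on the complementary set the jump in $\norm{\cdot}_0$ is at least one, so continuity of $f$ makes the arbitrarily small change in $f$ negligible against $\rho_2$. It remains only to check that the two subsets exhaust $N$ and that shrinking $U$ is harmless, so that the final set $N$ is genuinely a neighborhood of $x^*$ intersected with $X$.
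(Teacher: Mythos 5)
Your proof is correct and follows essentially the same route as the paper's: both rest on the local lower semicontinuity of the $\ell_0$-norm (so that $\norm{x}_0 \ge \norm{x^*}_0$ near $x^*$), the same two-case split between $\norm{x}_0 = \norm{x^*}_0$ (where the penalty cancels) and $\norm{x}_0 \ge \norm{x^*}_0 + 1$ (where continuity of $f$ makes the jump of size $\rho_2$ decisive). The only cosmetic difference is that you argue directly on a shrunken neighborhood while the paper argues by contradiction along a sequence $x^k \to x^*$; the mathematical content is identical.
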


\begin{proof}
   Let $\rho_1$ and $\rho_2$ be two penalty parameters, and let $x^*$ be a local minimum of
   \begin{equation}\label{Eq:SPOrho1}
      \min_x \ f(x) + \rho_1 \norm{x}_0 \ST x\in X.
   \end{equation}
   Assume that $ x^* $ is not a local minimum of 
   \begin{equation*}
      \min_x \ f(x) + \rho_2 \norm{x}_0 \ST x\in X.
   \end{equation*}
   Then there exists a sequence $ \{ x^k \} \subseteq X $ with $ x^k \to x^* $ such that
   \begin{equation}\label{Eq:SPOrho2}
      f(x^k) + \rho_2 \norm{x^k}_0 < f(x^*) + \rho_2 \norm{x^*}_0 \quad \forall k \in \mathbb{N}.
   \end{equation}
   Note that $ \norm{x^k}_0 \geq \norm{x^*}_0 $ holds for all $ k $ sufficiently large.
   First consider the case that there exists a subsequence such that $ \norm{x^k}_0 = \norm{x^*}_0 $ holds for all $ k \in K $.
   Then we obtain
   \begin{eqnarray*}
      f(x^k) + \rho_1 \norm{x^k}_0 
      & = & f(x^k) + \rho_2 \norm{x^k}_0 + ( \rho_1 - \rho_2 ) \norm{x^k}_0 \\
      & < & f(x^*) + \rho_2 \norm{x^*}_0 + ( \rho_1 - \rho_2 ) \norm{x^k}_0 \\
      & = & f(x^*) + \rho_2 \norm{x^*}_0 + ( \rho_1 - \rho_2 ) \norm{x^*}_0 
      \ = \ f(x^*) + \rho_1 \norm{x^*}_0 
   \end{eqnarray*}
   for all $ k \in K $, contradicting the assumption that $ x^* $ is a local minimum of \eqref{Eq:SPOrho1}.
   In the other case, we have $ \norm{x^k}_0 > \norm{x^*}_0 $ and, therefore, $ \norm{x^*}_0 + 1 \leq \norm{x^k}_0 $ for almost all $ k \in \mathbb{N} $.
   Furthermore, by continuity of $ f $, it follows that $ f(x^*) \leq f(x^k) + \rho_2 $ for all $ k $ sufficiently large.
   This implies
   \begin{equation*}
      f(x^*) + \rho_2 \norm{x^*}_0 
      \leq f(x^k) + \rho_2 + \rho_2 \norm{x^*}_0 
      = f(x^k) + \rho_2 \big( 1 + \norm{x^*}_0 \big) 
      \leq f(x^k) + \rho_2 \norm{x^k}_0,
   \end{equation*}
   a contradiction to \eqref{Eq:SPOrho2}.
   Altogether, this completes the proof.
\end{proof}

\noindent
The previous statement also holds for the two reformulated programs \ref{PropPap} and
\ref{ProbNew}.
This is a consequence, e.g., of the following result, which states that $ x^* $ is a local minimum of the sparse optimization problem \ref{SparseOpt} if and only if there exists a vector $ y^* $ such that the pair $ (x^*, y^*) $ is a local minimum of either \ref{PropPap} or \ref{ProbNew}.

\begin{thm}[Equivalence of Local Minima]\label{EquivLocMin}
   The following statements are equivalent:
   \begin{enumerate}[label = (\roman*)]
      \item $x^*$ is a local optimum of \ref{SparseOpt}.
      \item There exists $y^*$ such that $(x^*,y^*)$ is a local optimum of \ref{PropPap}.
      \item There exists $y^*$ such that $(x^*,y^*)$ is a local optimum of \ref{ProbNew}.
   \end{enumerate}
\end{thm}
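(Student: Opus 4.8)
The plan is to establish the two equivalences (i)$\Leftrightarrow$(ii) and (i)$\Leftrightarrow$(iii) by parallel arguments, in each case passing through the \emph{canonical} vector $\bar y$ defined by $\bar y_i := 1$ for $i \in I_0(x^*)$ and $\bar y_i := 0$ otherwise. Whenever $x^* \in X$, the pair $(x^*,\bar y)$ is feasible for both \ref{PropPap} and \ref{ProbNew}, and $\bar y$ realizes equality in Lemma~\ref{0normprop}, so the reformulated objective at $(x^*,\bar y)$ collapses to $f(x^*)+\rho\norm{x^*}_0$ for \ref{PropPap} and to $f(x^*)+\tfrac{\rho}{2}(\norm{x^*}_0-n)$ for \ref{ProbNew}. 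Conversely, at any local minimum $(x^*,y^*)$ of either reformulation, Lemma~\ref{locmin} forces $y_i^*=1$ on $I_0(x^*)$ while the constraint $x^*\circ y^*=0$ forces $y_i^*=0$ off $I_0(x^*)$; hence $y^*$ must coincide with $\bar y$, and the same objective identities apply.

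For the directions (i)$\Rightarrow$(ii) and (i)$\Rightarrow$(iii) I would set $y^*:=\bar y$ and test local optimality against an arbitrary feasible $(x,y)$ near $(x^*,\bar y)$. By Lemma~\ref{0normprop} the reformulated objective at $(x,y)$ dominates $f(x)+\rho\norm{x}_0$ for \ref{PropPap} (resp.\ $f(x)+\tfrac{\rho}{2}\norm{x}_0-\tfrac{\rho}{2}n$ for \ref{ProbNew}). Since $x$ lies in the local-optimality neighborhood of $x^*$ and $x^*$ is a local minimum of \ref{SparseOpt} — for \ref{ProbNew} with penalty $\tfrac{\rho}{2}$, which by Proposition~\ref{Prop:PenIndependent} is the same as penalty $\rho$ — this quantity is at least the corresponding value at $x^*$, which equals the reformulated objective at $(x^*,\bar y)$. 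The resulting chain shows the reformulated objective at $(x,y)$ is no smaller than at $(x^*,\bar y)$, giving local optimality of $(x^*,\bar y)$.

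The converse directions (ii)$\Rightarrow$(i) and (iii)$\Rightarrow$(i) carry the main difficulty. The naive attempt — lifting a sequence $x^k\to x^*$ with decreasing \ref{SparseOpt}-value to a sequence $(x^k,y^k)$ contradicting local optimality of the reformulation — fails, because the complementarity constraint forces $y_i^k=0$ at any index $i\in I_0(x^*)$ where $x_i^k\neq 0$, so $y^k$ need not converge to $\bar y$ and the lifted sequence escapes every neighborhood of $(x^*,\bar y)$. I would instead use a support case-split for $x\in X$ near $x^*$. Since $x_i^*\neq 0$ forces $x_i\neq 0$ nearby, one always has $I_0(x)\subseteq I_0(x^*)$, i.e.\ $\norm{x}_0\ge\norm{x^*}_0$. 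If $\norm{x}_0=\norm{x^*}_0$ then $I_0(x)=I_0(x^*)$, so $\bar y$ is canonical for $x$ as well and $(x,\bar y)$ is feasible and close to $(x^*,\bar y)$; local optimality of the reformulation then yields the desired inequality for the \ref{SparseOpt}-objective. If instead $\norm{x}_0\ge\norm{x^*}_0+1$, then continuity of $f$ together with $\rho>0$ makes the discrete jump of the penalty term strictly dominate the vanishing change in $f$, giving a strict inequality for free, independently of the reformulation. Combining both cases shows $x^*$ is a local minimum of \ref{SparseOpt} (with penalty $\tfrac{\rho}{2}$ in the \ref{ProbNew} case, upgraded to $\rho$ via Proposition~\ref{Prop:PenIndependent}). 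The crux throughout is exactly this separation: local optimality of the smooth reformulation only constrains \emph{same-support} perturbations, while support-increasing perturbations are controlled by the $\ell_0$-jump.
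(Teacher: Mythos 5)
Your proof is correct and follows essentially the same route as the paper's: reduction to the canonical $y^*$ via Lemma~\ref{locmin}, the sandwich argument from Lemma~\ref{0normprop} for the forward directions, the case split $\norm{x}_0=\norm{x^*}_0$ versus $\norm{x}_0\ge\norm{x^*}_0+1$ (with the continuity bound $f(x^*)\le f(x)+\rho$) for the converses, and Proposition~\ref{Prop:PenIndependent} to handle the $\tfrac{\rho}{2}$ scaling in the \ref{ProbNew} case. The only difference is presentational: the paper runs the converse directions by contradiction along a sequence $x^k\to x^*$ with subsequence extraction, whereas you argue directly on a neighborhood, which is the contrapositive of the same argument.
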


\begin{proof}
   Notice that, by Lemma~\ref{locmin}, $y^*$ has to be of the form 
   \[
      y^*_i = \begin{cases} 1 \quad \text{for} \ i \in I_0(x^*), \\ 0 \quad \text{otherwise,}\end{cases} \tag{*}
   \]
   in order for $(x^*,y^*)$ to be a local minimum of \ref{PropPap} or \ref{ProbNew}.\smallskip

   \noindent
   $(i) \Longrightarrow (ii)$: Let $x^*$ be a local minimum of \ref{SparseOpt} and let $y^*$ be defined as in (*). Then
   \begin{equation*} 
      f(x^*) + \rho\big(n - e^Ty^*\big) = f(x^*) + \rho \norm{x^*}_0
      \le f(x) +\rho \norm{x}_0
      \le f(x) + \rho \big(n-e^Ty\big)
   \end{equation*}
   for all feasible $(x,y)$ with $x$ sufficiently close to $x^*$, where the first equality and the
   last inequality follow from Lemma~\ref{0normprop}$(i)$.
   \smallskip

   \noindent
   $ (ii) \Longrightarrow (i) $: Let $ (x^*,y^*) $ be the local minimum of \ref{PropPap}
   with $ y^* $ as in (*). Assume that $ x^* $ is not a local minimum of
   \ref{SparseOpt}. Then there exists a sequence $ \{ x^k \} \subseteq X $ such that
   $ x^k \to x^* $ and
   \begin{equation}\label{Eq:ContraLocal}
      f(x^k) + \rho \norm{x^k}_0 < f(x^*) + \rho \norm{x^*}_0 \quad
      \forall k \in \mathbb{N}.
   \end{equation}
   Recall that $ \norm{x^k}_0 \geq \norm{x^*}_0 $ holds for all $ k $ sufficiently large.
   Hence we either have a subsequence $ \{ x^k \}_K $ such that $ \norm{x^k}_0 = \norm{x^*}_0 $
   holds for all $ k \in K $, or $ \norm{x^*}_0 + 1 \leq \norm{x^k}_0 $ is true for almost
   all $ k \in \mathbb{N} $. In the former case, it follows that $ (x^k, y^*) $ is feasible
   for \ref{PropPap}, hence we obtain from Lemma~\ref{0normprop}$(i)$ and the minimality of
   $ (x^*, y^*) $ for \ref{PropPap} that
   \begin{equation*}
      f(x^k) + \rho \norm{x^k}_0 = f(x^k) + \rho \norm{x^*}_0
      = f(x^k) + \rho (n - e^T y^*)
      \geq f(x^*) + \rho (n - e^T y^*)
      = f(x^*) + \rho \norm{x^*}_0,
   \end{equation*}
   which contradicts \eqref{Eq:ContraLocal}. Otherwise, we have $ \norm{x^*}_0 + 1 \leq \norm{x^k}_0 $ 
   and, by continuity, also $ f(x^*) \leq f(x^k) + \rho $ for all $ k \in \mathbb{N} $ sufficiently
   large, which, in turn, gives
   \begin{equation*}
      f(x^k) + \rho \norm{x^k}_0 \geq f(x^k) + \rho + \rho \norm{x^*}_0 \geq 
      f(x^*) + \rho \norm{x^*}_0.
   \end{equation*}
   Hence, also in this situation, we have a contradiction to \eqref{Eq:ContraLocal}.\smallskip

   \noindent
   $ (i) \Longrightarrow (iii) $: Let $ x^* $ be a local minimum of \ref{SparseOpt}. Then
   $ x^* $ is also a local minimum of the optimization problem
   \begin{equation}\label{Eq:SPOhalf}
      \min \ f(x) + \frac{\rho}{2} \big( \norm{x}_0 - n \big) \quad \text{s.t.} \quad x \in X,
   \end{equation}
   since, by Proposition~\ref{Prop:PenIndependent}, we can modify the penalty parameter, and since adding a constant to the objective function does not change the location of the local minima.
   Now, let $ y^* $ be defined as in statement (*). Then
   \begin{equation*} 
      f(x^*) + \frac{\rho}{2} \sum_{i=1}^n y_i^*\big(y_i^* - 2\big) = f(x^*) + \frac{\rho}{2} \big( \norm{x^*}_0 - n \big)
      \le f(x) + \frac{\rho}{2} \big( \norm{x}_0 - n \big)
      \le f(x) + \frac{\rho}{2} \sum_{i=1}^n y_i\big(y_i - 2\big),
   \end{equation*}
   for all feasible $(x,y)$ with $x$ sufficiently close to $x^*$, where the first equality and the last inequality follow from Lemma~\ref{0normprop}$(ii)$.\smallskip

   \noindent
   $ (iii) \Longrightarrow (i) $: Let $ (x^*,y^*) $ be a local minimum of \ref{ProbNew}
   with $ y^* $ as in (*). Assume that $ x^* $ is not a local minimum of
   \ref{SparseOpt}. Then $ x^* $ is not a local minimum of \eqref{Eq:SPOhalf}. Hence, there exists a sequence 
   $ \{ x^k \} \subseteq X $ such that $ x^k \to x^* $ and
   \begin{equation}\label{Eq:ContraLocalSPOhalf}
      f(x^k) + \frac{\rho}{2} \big( \norm{x^k}_0 - n\big) < f(x^*) + \frac{\rho}{2} \big( \norm{x^*}_0 - n \big) \quad
      \forall k \in \mathbb{N}.
   \end{equation}
   Recall that $ \norm{x^k}_0 \geq \norm{x^*}_0 $ holds for all $ k $ sufficiently large.
   Thus, once again, we either have a subsequence $ \{ x^k \}_K $ such that $ \norm{x^k}_0 = \norm{x^*}_0 $
   holds for all $ k \in K $, or $ \norm{x^*}_0 + 1 \leq \norm{x^k}_0 $ is true for almost
   all $ k \in \mathbb{N} $. In the former case, it follows that $ (x^k, y^*) $ is feasible
   for \ref{ProbNew}, hence we obtain from Lemma~\ref{0normprop}$(ii)$ and the minimality of
   $ (x^*, y^*) $ for \ref{ProbNew} that
   \begin{eqnarray*}
      f(x^k) + \frac{\rho}{2} \big(\norm{x^k}_0 - n\big) & = & f(x^k) + \frac{\rho}{2} \big( \norm{x^*}_0 - n\big)
      \ = \ f(x^k) + \frac{\rho}{2} \sum_{k=1}^n y_i^*\big( y_i^* - 2 \big) \\
      & \geq & f(x^*) + \frac{\rho}{2} \sum_{k=1}^n y_i^*\big( y_i^* - 2 \big)
      \ = \ f(x^*) + \frac{\rho}{2} \big(\norm{x^*}_0 - n\big),
   \end{eqnarray*}
   which contradicts \eqref{Eq:ContraLocalSPOhalf}. Otherwise, we have $ \norm{x^*}_0 + 1 \leq \norm{x^k}_0 $ 
   and, by continuity, also $ f(x^*) \leq f(x^k) + \frac{\rho}{2} $ for all $ k \in \mathbb{N} $ sufficiently
   large, which, in turn, gives
   \begin{equation*}
      f(x^k) + \frac{\rho}{2} \big(\norm{x^k}_0-n\big) \geq f(x^k) + \frac{\rho}{2} + \frac{\rho}{2} \big(\norm{x^*}_0 - n\big) \geq 
      f(x^*) + \frac{\rho}{2} \big(\norm{x^*}_0-n\big).
   \end{equation*}
   Hence, also in this situation, we have a contradiction to \eqref{Eq:ContraLocalSPOhalf}.
\end{proof}

\noindent
Scaling the penalty parameter $\rho$ as in the proof of the previous result has, of course, an impact on the global minima of \ref{SparseOpt}.
We therefore do not obtain equivalence of the global minima in the above sense, i.e., independent of the choice of the penalty parameter.
However, the following result holds.

\begin{thm}[Equivalence of Global Minima]\label{thm:GlobMin} 
   The following statements hold:
   \begin{enumerate}[label = (\roman*)]
      \item $x^*$ is a global minimum of \ref{SparseOpt} if and only if 
      there exists $y^*$ such that $(x^*,y^*)$ is a global minimum of \ref{PropPap}.
      \item $x^*$ is a global minimum of \ref{SparseOpt} with penalty parameter $\frac{\rho}{2}$ 
      if and only if there exists $y^*$ such that $(x^*,y^*)$ is a global minimum of \ref{ProbNew}.
   \end{enumerate}
\end{thm}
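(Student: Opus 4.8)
The plan is to prove both equivalences by the same two-step argument, exploiting the sharp bounds from Lemma~\ref{0normprop} together with the canonical choice of the auxiliary variable
\[
   y_i^* := \begin{cases} 1 & i \in I_0(x^*), \\ 0 & \text{otherwise,} \end{cases}
\]
which is precisely the vector forced by Lemma~\ref{locmin} and which turns the inequalities of Lemma~\ref{0normprop} into equalities. Since every global minimum is in particular a local minimum, any global minimizer of \ref{PropPap} or \ref{ProbNew} must already have its $y$-part of this form, so I never have to deal with a ``wrong'' $y$. Unlike the proof of Theorem~\ref{EquivLocMin}, no limiting sequences or case distinctions on $\norm{x^k}_0$ are needed here, because global optimality is a statement about all feasible points simultaneously.

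For part (i), forward direction, I would take a global minimum $x^*$ of \ref{SparseOpt}, set $y^*$ as above, and estimate, for an arbitrary feasible $(x,y)$ of \ref{PropPap} (so that $x \in X$ is feasible for \ref{SparseOpt}),
\[
   f(x) + \rho\big(n - e^T y\big) \ \ge\ f(x) + \rho\norm{x}_0 \ \ge\ f(x^*) + \rho\norm{x^*}_0 \ =\ f(x^*) + \rho\big(n - e^T y^*\big),
\]
where the first inequality is Lemma~\ref{0normprop}(i), the second is the global optimality of $x^*$, and the final equality uses that $y^*$ attains equality in Lemma~\ref{0normprop}(i). This shows that $(x^*,y^*)$ is globally optimal for \ref{PropPap}.

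For the reverse direction I would start from a global minimum $(x^*,y^*)$ of \ref{PropPap}; Lemma~\ref{locmin} gives $y_i^* = 1$ on $I_0(x^*)$, hence $n - e^T y^* = \norm{x^*}_0$. Given any competitor $x \in X$, I would build the matching auxiliary vector $\tilde y$ with $\tilde y_i = 1$ for $i \in I_0(x)$ and $\tilde y_i = 0$ otherwise, observe that $(x,\tilde y)$ is feasible for \ref{PropPap} with $n - e^T\tilde y = \norm{x}_0$, and conclude $f(x^*) + \rho\norm{x^*}_0 \le f(x) + \rho\norm{x}_0$ from the global optimality of $(x^*,y^*)$. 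Part (ii) then follows verbatim after replacing Lemma~\ref{0normprop}(i) by Lemma~\ref{0normprop}(ii): the objective of \ref{ProbNew} dominates $f(x) + \tfrac{\rho}{2}\big(\norm{x}_0 - n\big)$, and dropping the additive constant $-\tfrac{\rho}{2}n$ leaves exactly the objective of \ref{SparseOpt} with penalty parameter $\tfrac{\rho}{2}$, which is the source of the halved parameter in the statement.

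The argument is essentially a bookkeeping exercise, so I do not expect a genuine obstacle; the one point that must be handled with care is the constant/parameter shift in part (ii). One has to check that adding the constant $-\tfrac{\rho}{2}n$ and scaling by $\tfrac{\rho}{2}$ is applied consistently on both sides of the equivalence, so that the global minimizers of \ref{ProbNew} correspond to those of \ref{SparseOpt} with $\tfrac{\rho}{2}$ rather than $\rho$. Everything else is a direct transcription of the two chains of inequalities above.
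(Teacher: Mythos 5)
Your proposal is correct and follows essentially the same route as the paper: both rest on the sharp inequality from Lemma~\ref{0normprop} together with the canonical choice $y_i^*=1$ on $I_0(x^*)$ (which turns that inequality into an equality), and both reduce part (ii) to part (i) via the constant shift $-\tfrac{\rho}{2}n$ and the halved penalty parameter. You merely spell out the two chains of inequalities and cite Lemma~\ref{locmin} explicitly where the paper leaves the same facts implicit in its equality characterization.
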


\begin{proof}
   According to Lemma~\ref{0normprop} $(i)$, the inequality $ f(x) + \rho (n-e^Ty) \ge f(x) + \rho \norm{x}_0 $ holds for all $(x,y)$ feasible for \ref{PropPap}, with equality if and only if $y_i = 1$ for all $i \in I_0(x)$.
   The pair $(x^*,y^*)$ therefore solves \ref{PropPap} if and only if $x^*$ solves \ref{SparseOpt}, with $y_i^* = 1 $ for all $i \in I_0(x^*)$.

   To prove part $(ii)$, we recall that $x^*$ is a global minimum of \ref{SparseOpt} with penalty parameter $\frac{\rho}{2} $ if and only if $x^*$ is a solution of
   \[ 
      \min_x \ f(x) + \frac{\rho}{2} \big( \norm{x}_0 - n \big) \ST x\in X. 
   \]
   Using Lemma~\ref{0normprop} $(ii)$, the claim follows analogously to the proof of part $(i)$.
\end{proof}

\noindent
Effectively, formulation \ref{ProbNew} can be considered as a reformulation of the scaled problem 
\[ 
   \min_x \ f(x) + \frac{\rho}{2} \norm{x}_0 \ST x\in X . 
\]
Nevertheless, invariance of the local minima to the chosen parameter $\rho$ is also reflected in the stationary conditions, which we derive in the next section.
We therefore neglect the scaling issue in our subsequent analysis of a local Newton-type method, as any solution found cannot guaranteed to be globally optimal.

\section{Stationary Conditions}\label{Sec:Stationarity}

This section introduces a stationarity concept for the nonsmooth and discontinuous sparse
optimization problem \ref{SparseOpt} and relates it to the KKT conditions of the two
smooth reformulations from \ref{PropPap} and \ref{ProbNew}.
Throughout this section, 
we assume that all functions $ f, g, h $ are continuously differentiable.

To this end, let us introduce the function
\[ 
   L^{SP}(x,\lambda, \mu) := f(x) + \lambda^T g(x) + \mu^T h(x)
\]
which is exactly the Lagrangian of \ref{SparseOpt} except that we do not include
the term with the $ \ell_0 $-norm.
In particular, $ L^{SP} $ is therefore a smooth function.
Based on $ L^{SP} $, the ordinary Lagrangians of the smooth optimization
problems \ref{PropPap} and \ref{ProbNew} can be written as
\[ 
   L^{lin}(x,y,\lambda, \mu, \gamma,\sigma) := L^{SP}(x,\lambda, \mu) + \rho (n-e^Ty) + 
   \gamma^T (x \circ y) + \sigma^T (y - e)  
\]
and
\[ 
  L^{sq}(x,y,\lambda,\mu,\gamma):= L^{SP}(x,\lambda,\mu) + 
  \frac{\rho}{2} \sum_{i=1}^n y_i (y_i - 2) + \gamma^T (x \circ y) , 
\]
respectively.
The standard KKT conditions of \ref{PropPap} are therefore given by
\begin{align}
   \nabla_x L^{lin} (x,y,\lambda,\mu,\gamma,\sigma)
   = \nabla_x L^{SP}(x,\lambda, \mu) + \gamma \circ y &= 0, \label{KKTs}\\
   \nabla_y L^{lin} (x,y,\lambda,\mu,\gamma,\sigma)
   = -\rho e + \gamma \circ x + \sigma &= 0, \label{SigInd}\\
   \lambda \ge 0, \quad g(x) \le 0, \quad \lambda \circ g(x) &= 0, \label{UnglRes}\\
   h(x) &= 0, \label{GlhRes}\\
   x \circ y &= 0, \label{Ortho} \\
   \sigma \ge 0, \quad y \le e, \quad \sigma \circ (y-e) &= 0. \label{eqn:NCP}
\end{align}
We take a closer look at system (\ref{SigInd}), (\ref{Ortho}), (\ref{eqn:NCP}) componentwise for $i = 1,...,n$
\begin{align}
    -\rho + \gamma_i x_i + \sigma_i &= 0, \label{SigIndcomp} \\
    x_i \cdot y_i = 0, \label{Ortho:comp}\\
    \sigma_i \ge 0, \quad y_i \le 1, \quad \sigma_i(y_i - 1) &= 0, \label{eqn:NCPcomp}
\end{align}
and assume there is a solution $(x_i^*,y_i^*,\gamma_i^*,\sigma_i^*)$. We distinguish two cases. First, let $x_i^* = 0$, then clearly $\sigma_i^* = \rho$ and $y_i^* = 1$, whereas $\gamma_i^*$ is arbitrary. In the second case, we have $x_i^* \neq 0$, which immediately implies $y_i^* = 0$, $\sigma_i^* = 0$ and further $\gamma_i^* = \rho/x_i^*$. Hence, $(x_i^*,y_i^*,\gamma_i^*)$ also solves
\begin{align}
    \rho(y_i - 1) +  \gamma_i x_i = 0 \quad \text{and} \quad x_i \cdot y_i = 0. \label{eqn:reduced}
\end{align}
Conversely, let $(x_i^*,y_i^*,\gamma_i^*)$ be a solution of equation (\ref{eqn:reduced}). Then with $\sigma_i^*=\rho$, if $x_i^* = 0$ and $\sigma_i^* = 0$, if $x_i^* \neq 0$ the tuple $(x_i^*,y_i^*,\gamma_i^*,\sigma_i^*)$ is clearly a solution of system (\ref{SigIndcomp}), (\ref{Ortho:comp}), (\ref{eqn:NCPcomp}).

Using this reasoning, we can compress the system (\ref{KKTs})--(\ref{eqn:NCP}) by deleting the variable $\sigma$ to the system
\begin{align}
   \nabla_x L^{SP}(x,\lambda,\mu) + \gamma \circ y &= 0, \label{KKTstart}\\
   \rho (y - e) + \gamma \circ x &= 0, \label{KKTnew} \\
   \lambda \ge 0, \quad g(x) \le 0, \quad \lambda \circ g(x) &= 0. \label{KKTe} \\
   h(x) &= 0, \\
   x \circ y &= 0, \label{Ortho2} 
\end{align}
Now, it is easy to see that \eqref{KKTstart}--\eqref{KKTe} are precisely the KKT conditions of
problem \ref{ProbNew}. In summary, we have the following result.

\begin{prop}[Equivalence of KKT Points]\label{EquivKKT}
   The vector $(x^*,y^*,\lambda^*,\mu^*,\gamma^*)$ is a KKT point of \ref{ProbNew} if and only
   if there exists $\sigma^*$ such that $(x^*,y^*,\lambda^*,\mu^*,\gamma^*,\sigma^*)$ is a KKT point of \ref{PropPap}. The multipliers $\sigma^*$, $\gamma^*$ and the variable $y^*$ depend uniquely on 
   $(x^*,\lambda^*,\mu^*)$ with
   \begin{equation*}
      y_i^* = \begin{cases} 1, & i\in I_0(x^*), \\ 0, & i\notin I_0(x^*), \end{cases}\quad
      \gamma_i^* =\begin{cases} - \nabla_{x_i} L^{SP}(x^*,\lambda^*,\mu^*)\ , & i\in I_0(x^*), \\
      \frac{\rho}{x_i^*} \ , & i\notin I_0(x^*),\end{cases}\quad
      \sigma_i^* = \begin{cases} \rho , & i\in I_0(x^*), \\ 0 , & i\notin I_0(x^*). \end{cases}
   \end{equation*}
\end{prop}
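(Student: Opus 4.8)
The plan is to exploit the componentwise reduction already carried out in the text before the statement, which rewrites the $y$-stationarity and $\sigma$-complementarity of \ref{PropPap} as the single equation \eqref{KKTnew} of \ref{ProbNew}. The first thing I would record is that the two KKT systems share the bulk of their conditions: the $x$-gradient equations \eqref{KKTs} and \eqref{KKTstart} are literally identical, as are the primal feasibility and complementarity conditions \eqref{UnglRes}/\eqref{KKTe} for $g$, the equation $h(x)=0$, and the orthogonality constraint \eqref{Ortho}/\eqref{Ortho2}. Hence the only genuine difference between a KKT point of \ref{PropPap} and one of \ref{ProbNew} lies in the $y$-block: \ref{PropPap} carries the extra multiplier $\sigma$ together with \eqref{SigInd} and \eqref{eqn:NCP}, whereas \ref{ProbNew} has only \eqref{KKTnew}. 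The whole proof therefore reduces to showing that, for data $(x,y,\gamma)$ satisfying $x \circ y = 0$, the $y$-block of \ref{PropPap} is solvable in $\sigma$ if and only if \eqref{KKTnew} holds, with $\sigma$ then uniquely determined.

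For this reduction I would argue componentwise exactly as in \eqref{SigIndcomp}--\eqref{eqn:reduced}. Fixing an index $i$ and distinguishing the cases $x_i = 0$ and $x_i \neq 0$, the constraint \eqref{Ortho:comp} together with \eqref{SigIndcomp} and \eqref{eqn:NCPcomp} forces $y_i = 1$, $\sigma_i = \rho$ in the first case (using $\rho > 0$ in the complementarity to rule out $y_i < 1$) and $y_i = 0$, $\sigma_i = 0$ in the second, while \eqref{SigIndcomp} then pins down $\gamma_i = \rho/x_i$ on $\{\, i : x_i \neq 0 \,\}$; this is precisely equation \eqref{eqn:reduced}. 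Conversely, given a solution of \eqref{eqn:reduced}, setting $\sigma_i := \rho$ for $x_i = 0$ and $\sigma_i := 0$ otherwise produces a solution of \eqref{SigIndcomp}--\eqref{eqn:NCPcomp}. Running this equivalence over all $i$ and combining it with the shared conditions establishes both implications of the biconditional: a KKT point of \ref{ProbNew} yields a KKT point of \ref{PropPap} by adjoining this $\sigma^*$, and deleting $\sigma^*$ from a KKT point of \ref{PropPap} returns a KKT point of \ref{ProbNew}.

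It then remains to read off the explicit formulas and their uniqueness. The case analysis already gives $y_i^* = 1$, $\sigma_i^* = \rho$ for $i \in I_0(x^*)$ and $y_i^* = 0$, $\sigma_i^* = 0$ for $i \notin I_0(x^*)$, so $y^*$ and $\sigma^*$ are uniquely fixed by $x^*$ alone. For $\gamma^*$, the component $\gamma_i^* = \rho/x_i^*$ on $i \notin I_0(x^*)$ was just obtained; on $i \in I_0(x^*)$ the $\sigma$-block leaves $\gamma_i$ free, so I would recover it from the shared $x$-gradient equation \eqref{KKTstart}: since $y_i^* = 1$ there, \eqref{KKTstart} reads $\nabla_{x_i} L^{SP}(x^*,\lambda^*,\mu^*) + \gamma_i^* = 0$, giving $\gamma_i^* = -\nabla_{x_i} L^{SP}(x^*,\lambda^*,\mu^*)$ and hence uniqueness of $\gamma^*$ in terms of $(x^*,\lambda^*,\mu^*)$. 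The one point that requires care — and thus the main obstacle, such as it is — is the bookkeeping of where $\gamma$ is determined: one must check that $\gamma_i$ is genuinely unconstrained by the $\sigma$-block on $I_0(x^*)$, so that its value is dictated solely by the $x$-gradient equation, while being over-determined off $I_0(x^*)$; and one must verify the sign condition $\sigma^* \ge 0$, which is exactly where the hypothesis $\rho > 0$ enters.
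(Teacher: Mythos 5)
Your proposal is correct and follows essentially the same route as the paper: both rest on the componentwise equivalence of \eqref{SigIndcomp}--\eqref{eqn:NCPcomp} with \eqref{eqn:reduced} under the shared constraint $x \circ y = 0$, read off $y^*$, $\sigma^*$, and $\gamma_i^* = \rho/x_i^*$ (for $i \notin I_0(x^*)$) from that case analysis, and recover $\gamma_i^*$ for $i \in I_0(x^*)$ from the $x$-gradient equation \eqref{KKTs}/\eqref{KKTstart}. Your additional care about where $\gamma_i$ is free versus determined, and where $\rho > 0$ enters, simply makes explicit what the paper leaves implicit.
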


\begin{proof}
   The equivalence of the two KKT systems is an immediate result by the equivalence of (\ref{SigInd}) and (\ref{eqn:NCP}) to (\ref{KKTnew}) under the condition (\ref{Ortho}) present in both systems, which we established componentwise.
   Additionally, we already verified the unique dependence of $y^*$ and $\sigma^*$ on $x^*$, as well as $\gamma^*_i = \rho/x_i^*$ for $i \notin I_0(x^*)$.
   The representation of $\gamma_i^*$ for $i \in I_0(x^*)$, on the other hand, can be obtained by (\ref{KKTs}).
\end{proof}

\noindent
For a fixed triple $(x,\lambda,\mu)$, the only possible choice of $(y,\gamma,\sigma)$ with which 
a KKT point of either of the above systems could be obtained, is therefore already determined. 
This, in turn, tells us that the possibility to satisfy the KKT conditions depends on the 
values of $(x, \lambda, \mu)$ only. This motivates to define a stationary concept for
the original sparse optimization problem \ref{SparseOpt} in the following way.

\begin{defn}\label{Def:S-stationary}
   We call a point $x^*$ an {\em S-stationary point} ({\em strongly stationary point}) 
   of \ref{SparseOpt} if there exist multipliers $(\lambda^*, \mu^*)$ such that 
   the following conditions hold:
   \begin{align}\label{SS}
      \begin{split}
         \nabla_{x_i} L^{SP} (x^*,\lambda^*, \mu^*) &= 0, \quad \forall i \notin I_0(x^*), \\
         \lambda^* \ge 0, \ g(x^*) \le 0, \ \lambda^* \circ g(x^*) &=0, \\
         h(x^*) &= 0.
      \end{split}
   \end{align}
\end{defn}

\noindent
Note that there exist a couple of different stationarity concepts like 
W-, C-, M-, and S-stationarity for a number of related problem classes,
including mathematical programs with complementarity constraints \cite{HKS16},
cardinality constraints \cite{CKS16}, vanishing constraints \cite{HoK08},
and switching constraints \cite{Meh20}. 
Similarly, it would be possible to state some of these other stationarity concepts
for problem \ref{SparseOpt} as well. However, on the one hand, it turns out 
that suitable methods for the solution of sparse optimization problems can
be shown to converge to S-stationary points, see \cite{PhD-ThesisRaharja} for some preliminary
results in this direction, which is in contrast to the other classes of
problems mentioned before and which indicates that there is no need to introduce
these weaker stationarity concepts for sparse optimization problems, and, on the other
hand, for the purpose of the approach presented here, we only require the
S-stationarity from Definition~\ref{Def:S-stationary}.

S-stationarity turns out to be equivalent to the KKT conditions of the reformulated problems
\ref{PropPap} and \ref{ProbNew}.

\begin{thm}[Equivalence of S-Stationary and KKT Points]\label{Thm:EquivStat}
   The following are equivalent:
   \begin{enumerate}[label = (\roman*)]
      \item $x^*$ is S-stationary for \ref{SparseOpt} with some multipliers $(\lambda^*, \mu^*)$.
      \item There exists $(y^*,\gamma^*, \sigma^*)$, depending on $(x^*,\lambda^*,\mu^*)$ only, such that
      $(x^*,y^*,\lambda^*,\mu^*,\gamma^*,\sigma^*)$ is a KKT point of \ref{PropPap}.
      \item There exists $(y^*,\gamma^*)$, depending on $(x^*,\lambda^*,\mu^*)$ only, such that
      $(x^*,y^*,\lambda^*,\mu^*,\gamma^*)$ is a KKT point of \ref{ProbNew}.
   \end{enumerate}
\end{thm}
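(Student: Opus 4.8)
The plan is to piggy-back on Proposition~\ref{EquivKKT}, which already establishes the equivalence $(ii)\Leftrightarrow(iii)$ together with the \emph{unique} formulas for $(y^*,\gamma^*,\sigma^*)$ in terms of $(x^*,\lambda^*,\mu^*)$. Consequently it suffices to prove $(i)\Leftrightarrow(iii)$, and the ``depending on $(x^*,\lambda^*,\mu^*)$ only'' clause then comes for free from that uniqueness. The whole argument is a disciplined componentwise case split on the index sets $i\in I_0(x^*)$ and $i\notin I_0(x^*)$.

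For $(i)\Rightarrow(iii)$ I would simply \emph{define} $y^*$ and $\gamma^*$ by the formulas of Proposition~\ref{EquivKKT}, i.e.\ $y_i^*=1,\ \gamma_i^*=-\nabla_{x_i}L^{SP}(x^*,\lambda^*,\mu^*)$ for $i\in I_0(x^*)$, and $y_i^*=0,\ \gamma_i^*=\rho/x_i^*$ for $i\notin I_0(x^*)$, and then verify the KKT system \eqref{KKTstart}--\eqref{Ortho2} of \ref{ProbNew} one component at a time. On $i\in I_0(x^*)$ the stationarity equation \eqref{KKTstart} holds by the very choice of $\gamma_i^*$, equation \eqref{KKTnew} holds because $x_i^*=0$ and $y_i^*=1$, and \eqref{Ortho2} holds because $x_i^*=0$. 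On $i\notin I_0(x^*)$ we have $y_i^*=0$, so \eqref{KKTstart} collapses to $\nabla_{x_i}L^{SP}(x^*,\lambda^*,\mu^*)=0$, which is exactly the first line of the S-stationarity conditions~\eqref{SS}, while \eqref{KKTnew} holds by $\gamma_i^*=\rho/x_i^*$ and \eqref{Ortho2} holds because $y_i^*=0$. Finally, the multiplier block \eqref{KKTe} and the equality $h(x^*)=0$ are copied verbatim from~\eqref{SS}.

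For $(iii)\Rightarrow(i)$ I would start from an arbitrary KKT point of \ref{ProbNew} and simply read off S-stationarity. The complementarity \eqref{Ortho2} forces $y_i^*=0$ whenever $x_i^*\neq0$; for such indices \eqref{KKTstart} then reduces to $\nabla_{x_i}L^{SP}(x^*,\lambda^*,\mu^*)=0$, which is precisely the only gradient condition required by Definition~\ref{Def:S-stationary}. The conditions \eqref{KKTe} and $h(x^*)=0$ supply the remaining requirements of~\eqref{SS} directly, with $(\lambda^*,\mu^*)$ serving as the S-stationarity multipliers. The induced values of $y^*$ and $\gamma^*$ are pinned down by $(x^*,\lambda^*,\mu^*)$ through the same componentwise reasoning already carried out just before Proposition~\ref{EquivKKT}.

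I do not expect a genuine obstacle: the substance is entirely the case split, and the one point deserving care is to observe that S-stationarity imposes \emph{no} condition on the components $\gamma_i^*$ with $i\in I_0(x^*)$. This matches the fact that in the reformulations those components are merely \emph{determined} (not constrained) by \eqref{KKTstart}, so the gradient equation is automatically satisfiable on $I_0(x^*)$ and transmits no information back to \ref{SparseOpt}. This is exactly why the S-stationarity concept of Definition~\ref{Def:S-stationary} retains only the gradient condition on $i\notin I_0(x^*)$, and recognizing this correspondence is the conceptual heart of the statement.
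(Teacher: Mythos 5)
Your proposal is correct and follows essentially the same route as the paper's proof: both reduce $(ii)\Leftrightarrow(iii)$ to Proposition~\ref{EquivKKT} and prove $(i)\Leftrightarrow(iii)$ by choosing $(y^*,\gamma^*)$ via the formulas of that proposition in one direction and reading off \eqref{SS} from \eqref{KKTstart} (using that complementarity forces $y_i^*=0$ for $i\notin I_0(x^*)$) in the other. The only difference is that you spell out the componentwise verification that the paper leaves implicit, which is a matter of detail rather than of approach.
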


\begin{proof}
   Assume $x^*$ is S-stationary for \ref{SparseOpt}.
   Then there exists $(\lambda^*,\mu^*)$ such that \eqref{SS} holds.
   Choosing $y^*$ and $\gamma^*$ as in Proposition~\ref{EquivKKT}, we obtain a KKT point of \ref{ProbNew}.
   Conversely, let $(x^*,y^*,\lambda^*,\mu^*,\gamma^*)$ be a KKT point of \ref{ProbNew}.
   Then \eqref{KKTstart} holds.
   Hence \eqref{SS} is satisfied for $(x^*,\lambda^*,\mu^*)$, which implies that $x^*$ is an S-stationary point of \ref{SparseOpt}.
   The remaining equivalence follows from Proposition~\ref{EquivKKT}.
\end{proof}

\noindent
We next introduce a problem-tailored constraint qualification which, in particular, 
guarantees that a local minimum of \ref{SparseOpt} is an S-stationary point. This
constraint qualification is relatively strong, and much weaker ones will be discussed
in a forthcoming report. For the purpose of this paper, where we plan to consider
a Lagrange-Newton-type method for the solution of sparse optimization problems, the
following condition is the most suitable one.

\begin{defn}
   A feasible point $x^* \in X$ of \ref{SparseOpt} satisfies the \emph{sparse LICQ} (\emph{ SP-LICQ}, for
   short) if the vectors
   \[ 
      \nabla g_i(x^*) \ (i \in I_g(x^*)), \quad \nabla h_i(x^*) \ (i = 1,...,p),
      \quad e_i \ (i \in I_0(x^*))
   \]
   are linearly independent.
\end{defn}

\noindent
Note that SP-LICQ corresponds to standard LICQ of the {\em tightened
nonlinear program}
\begin{equation}\label{Eq:TNLP}
   \min_x \ f(x) \ST g(x) \leq 0, \; h(x) = 0, \; x_i = 0 \ (i \in I_0(x^*))
\end{equation}
depending on a feasible point $ x^* \in X $. 
We establish the following connection between SP-LICQ for \ref{SparseOpt} with standard LICQ for \ref{PropPap} and \ref{ProbNew}.

\begin{thm}[Equivalence of LICQ-type Conditions]\label{EquivLICQ} 
   Let $(x^*,y^*)$ be feasible for \ref{PropPap} and \ref{ProbNew}, respectively and assume
   $\{i \ | \ x_i^* = y_i^* = 0\} = \emptyset$.
   Then the following are equivalent:
   \begin{enumerate}[label = (\roman*)]
      \item SP-LICQ is satisfied at $x^*$,
      \item Standard LICQ holds at $(x^*,y^*)$ for \ref{PropPap},
      \item Standard LICQ holds at $(x^*,y^*)$ for Problem \ref{ProbNew}.
   \end{enumerate}
\end{thm}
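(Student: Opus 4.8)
The plan is to reduce everything to a block decomposition of the active constraint gradients of \ref{PropPap} and \ref{ProbNew} in the variables $(x,y) \in \RR^{2n}$, and to read off linear (in)dependence one coordinate block at a time. First I would record the structural consequence of the standing hypothesis. Since $(x^*,y^*)$ is feasible, we have $x_i^* y_i^* = 0$ for every $i$; together with $\{i \mid x_i^* = y_i^* = 0\} = \emptyset$ this forces, for each index $i$, exactly one of $x_i^*, y_i^*$ to be nonzero, namely $y_i^* \neq 0$ for $i \in I_0(x^*)$ and $x_i^* \neq 0,\ y_i^* = 0$ for $i \notin I_0(x^*)$. I would also remark why the assumption is indispensable: if some index had $x_i^* = y_i^* = 0$, the gradient of the complementarity constraint $x_i y_i = 0$ would vanish at $(x^*,y^*)$, so standard LICQ fails for both reformulations while SP-LICQ may well hold, breaking the equivalence.

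Next I would list the active gradients. For \ref{ProbNew} these are $(\nabla g_i(x^*),0)$ for $i \in I_g(x^*)$, $(\nabla h_i(x^*),0)$ for $i=1,\dots,p$, and the complementarity gradients $(y_i^* e_i,\, x_i^* e_i)$ for $i=1,\dots,n$. The crucial observation, forced by the dichotomy above, is that each complementarity gradient \emph{splits}: it equals $(y_i^* e_i, 0)$ for $i \in I_0(x^*)$ and $(0, x_i^* e_i)$ for $i \notin I_0(x^*)$. Writing a vanishing linear combination with coefficients $a_i, b_i, c_i$ and separating the $x$- and $y$-components, the $y$-component reads $\sum_{i \notin I_0(x^*)} c_i x_i^* e_i = 0$, which (as $x_i^* \neq 0$ there) gives $c_i = 0$ for all $i \notin I_0(x^*)$. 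Substituting back, the $x$-component becomes
\[
   \sum_{i \in I_g(x^*)} a_i \nabla g_i(x^*) + \sum_{i=1}^p b_i \nabla h_i(x^*) + \sum_{i \in I_0(x^*)} c_i y_i^* e_i = 0,
\]
which is exactly a combination of the SP-LICQ vectors, the coefficient of $e_i$ being $c_i y_i^*$ with $y_i^* \neq 0$. Hence this system admits only the trivial solution iff the SP-LICQ vectors are linearly independent, giving (i) $\Leftrightarrow$ (iii); the converse direction is the same computation read backwards, lifting any nontrivial SP-dependence by setting $c_i := (\text{SP-coefficient of } e_i)/y_i^*$ for $i \in I_0(x^*)$ and $c_i := 0$ otherwise.

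For \ref{PropPap} the only additional active constraints are $y_i - 1 \leq 0$, active precisely when $y_i^* = 1$; since $y_i^* = 0$ for $i \notin I_0(x^*)$, every such active index lies in $I_0(x^*)$ and contributes the gradient $(0, e_i)$. I would argue these extra gradients are harmless: for $i \in I_0(x^*)$ the complementarity gradient has zero $y$-part (there $x_i^* = 0$), so each $(0,e_i)$ with $y_i^* = 1$ sits isolated in its own $y_i$-coordinate and enters neither the $x$-component nor the remaining $y$-equations. Running the same block argument, the $y$-component now forces $c_i = 0$ for $i \notin I_0(x^*)$ and $d_i = 0$ for the active $y \le e$ multipliers, and the $x$-component reduces verbatim to the displayed SP-LICQ combination. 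This yields (i) $\Leftrightarrow$ (ii), and hence all three statements are equivalent.

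The main obstacle — in fact the only delicate point — is bookkeeping the coupling between the two coordinate blocks introduced by the complementarity gradients, and confirming that the $y \leq e$ constraints of \ref{PropPap} neither create nor destroy independence relative to \ref{ProbNew}. Everything hinges on the splitting of the complementarity gradients, which is exactly what the disjointness hypothesis $\{i \mid x_i^* = y_i^* = 0\} = \emptyset$ provides; once that is in place the two block equations decouple cleanly and the reduction to the SP-LICQ vectors is purely mechanical.
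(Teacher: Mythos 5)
Your proof is correct and takes essentially the same route as the paper's: both arguments exploit the empty bi-active set to split each complementarity gradient $(y_i^* e_i,\, x_i^* e_i)$ into a pure $x$-block vector (for $i \in I_0(x^*)$, with $y_i^* \neq 0$) or a pure $y$-block vector (for $i \notin I_0(x^*)$, with $x_i^* \neq 0$), after which the $y$-block vectors --- including the active bound gradients $(0,e_i)$ of \ref{PropPap}, which live in distinct coordinates inside $I_0(x^*)$ --- decouple, and linear independence reduces to the SP-LICQ family up to nonzero rescalings. The only difference is presentational: the paper packages this as linear independence of a family with arbitrary nonzero scalars $\alpha_i,\beta_i,\xi_i$ and arbitrary $J \subseteq I_0(x^*)$, instantiated once for each reformulation, whereas you carry out the block elimination on an explicit vanishing linear combination.
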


\begin{proof}
It is easy to see that SP-LICQ holds at $x^*$  for \ref{SparseOpt} if and only if 
the following vectors are linearly independent:
\begin{align} 
\begin{split}
   \begin{pmatrix}\nabla g_i(x^*) \\ 0\end{pmatrix} \ (i \in I_g(x^*)), \quad
   \begin{pmatrix}\nabla h_i(x^*) \\ 0 \end{pmatrix} \ (i = 1,...,p), \quad 
   \begin{pmatrix}\alpha_i e_i \\ 0\end{pmatrix} \ (i \in I_0(x^*)),\\
   \begin{pmatrix} 0 \\ \beta_i e_i\end{pmatrix} \ (i \notin I_0(x^*)) \quad \begin{pmatrix}0 \\ \xi_i e_i\end{pmatrix} \ (i \in J),
\end{split}\label{gradients}
\end{align}
for arbitrary $\alpha_i, \beta_i, \xi_i \in \RR \setminus\{0\}$ and an arbitrary 
subset $J \subseteq I_0(x^*)$.\\

\noindent
{\em Case 1}: Choose $(x^*,y^*)$ feasible for \ref{PropPap} with $\{i \ | \ x_i^* = y_i^* = 0\} = \emptyset$. Now, set
$\alpha_i := y_i^*$ for $i \in I_0(x^*)$, $\beta_i := x^*_i$ for $i \notin I_0(x^*)$. 
Furthermore, set 
$J := \{ i\ | \ y_i^* = 1 \} \subset I_0(x^*)$ and $\xi_i := 1$ for $i \in J$, respectively. 
Plugging our choices of $\alpha, \beta, \xi$,
and $J$ into (\ref{gradients}) yields the set of gradients of the equality and active inequality constraints of \ref{PropPap}. The
claim follows.\\

\noindent
{\em Case 2}: Let $(x^*,y^*)$ feasible for \ref{ProbNew} with $\{i \ | \ x_i^* = y_i^* = 0\} = \emptyset$.
Choose $J := \emptyset$ and $\alpha, \beta$ as in case 1. Then system (\ref{gradients}) collapses to the set of gradients
of the equality and active inequality constraints of \ref{ProbNew}. The claim follows.
\end{proof}

\noindent
The central assumption in Theorem~\ref{EquivLICQ} is, of course, that the bi-active set
$\{i \ | \ x_i^* = y_i^* = 0\} $ is empty. In the context of sparse optimization 
problems and our reformulations, however, this assumption turns out to be very weak and
is automatically satisfied, for example, if $ x^* $ is a local minimum of \ref{SparseOpt} or at a KKT-point of either \ref{PropPap} or \ref{ProbNew}. 
This is an immediate consequence of Lemma~\ref{locmin}.

Therefore, if SP-LICQ holds at a local optimum $x^*$ of \ref{SparseOpt}, it follows 
that there is a unique vector $y^*$ with $y_i^* = 1$ for all $i \in I_0(x^*)$ such that 
the KKT conditions of \ref{PropPap} and \ref{ProbNew}, respectively, have a unique solution 
guaranteed by standard LICQ, which holds for both of the smooth reformulations. In 
particular, local minima of $x^*$ of \ref{SparseOpt}, where SP-LICQ holds, are thus S-stationary with uniquely defined multipliers. Nevertheless, SP-LICQ is a relatively strong
constraint qualification, and we will come back to this point later.

\section{Second-Order Conditions}\label{Sec:SecondOrder}

The aim of this section is to introduce problem-tailored second-order conditions for
the sparse optimization problem \ref{SparseOpt} and to relate these conditions to standard
second-order conditions associated with the two smooth reformulations 
\ref{PropPap} and \ref{ProbNew}, respectively. Naturally, these second-order conditions play a
central role for our subsequent development of Lagrange-Newton-type methods for 
the solution of sparse optimization problems. Note that, throughout this section,
we make the implicit assumption that all functions $f, g, h $ are twice continuously
differentiable.

\begin{defn}\label{Def:SOSC}
Let $x^*$ be an S-stationary point of \ref{SparseOpt}, with multipliers $(\lambda^*,\mu^*)$. We call
\begin{align*}
   C^{SPO}(x^*,\lambda^*) := \{ d  \mid \nabla g_i (x^*)^T d &= 0 \quad \forall i \in I_g(x^*),\; \lambda_i^* > 0, & \\
   \nabla g_i(x^*)^T d & \le 0 \quad \forall i \in I_g(x^*), \; \lambda_i^* = 0, &\\ 
   \nabla h(x^*)^T d &= 0, &\\
   d_i &= 0 \quad \forall i \in I_0(x^*) \qquad \qquad \},
   \end{align*}
and
\begin{align*}
   SC^{SPO}(x^*,\lambda^*) := \{ d \mid \nabla g_i (x^*)^T d &= 0 \quad \forall i \in I_g(x^*),\ \lambda_i^* > 0, \\
   \nabla h(x^*)^T d &= 0, \\
   d_i &= 0 \quad \forall i \in I_0(x^*) \qquad \qquad  \},
\end{align*}
the {\em critical cone} and {\em critical subspace}, respectively, of \ref{SparseOpt} at $x^*$ with multiplier $\lambda^*$.
\end{defn}

\noindent
Note that the critical cone and the critical subspace of problem \ref{SparseOpt}
are problem-tailored definitions, which can also be interpreted as the standard
critical cone and the standard critical subspace of the corresponding tightened nonlinear
program from \eqref{Eq:TNLP}. The usual critical cone and critical subspace of
problem \ref{SparseOpt} would not contain the condition that $ e_i ^T d = 0$ for 
$i \in I_0(x^*)$ and, hence, these standard sets would be larger than those from the previous
definition.

Definition~\ref{Def:SOSC} allows the following formulation of sparse second-order
sufficiency conditions.

\begin{defn}
   Let $x^*$ be an S-stationary point of \ref{SparseOpt}, with multipliers $(\lambda^*,\mu^*)$. Then
   we say that $(x^*,\lambda^*,\mu^*)$ satisfies
   \begin{enumerate}[label =(\roman*)]
      \item {\em SP-SOSC (sparse second-order sufficiency condition)} if
      \[
         d^T \nabla_{xx}^2 L^{SP}(x^*,\lambda^*,\mu^*) d > 0, \quad \forall d \in C^{SPO}(x^*,\lambda^*) \setminus \{ 0 \}  ,
      \]
      \item {\em strong SP-SOSC (strong sparse second-order sufficiency condition)} if
      \[
         d^T \nabla_{xx}^2 L^{SP}(x^*,\lambda^*,\mu^*) d > 0, \quad \forall d \in SC^{SPO}(x^*,\lambda^*) \setminus \{ 0 \}  .
      \]
   \end{enumerate}
\end{defn}

\noindent
Since the sparse critcial cone and sparse critical subspace are smaller than their 
standard counterparts, it follows that SP-SOSC and strong SP-SOSC are weaker assumptions
than standard SOSC and strong SOSC, respectively. We clarify the significance of SP-SOSC in the 
following result.

\begin{thm}[Second-Order Sufficiency Conditions]\label{Thm:SP-SOSC}
   Let $(x^*,\lambda^*,\mu^*)$ be an S-stationary point such that (strong) SP-SOSC holds in $ x^* $.
   Then the following statements hold:
   \begin{enumerate}[label = (\roman*)]
      \item (Strong) SOSC for \ref{PropPap} holds at $(x^*,y^*,\lambda^*,\mu^*,\gamma^*,\sigma^*)$ with $(y^*, \gamma^*, \sigma^*)$ defined in Proposition~\ref{EquivKKT}.
      \item (Strong) SOSC for \ref{ProbNew} holds at $(x^*,y^*,\lambda^*,\mu^*,\gamma^*)$ with $(y^*, \gamma^*)$ defined in Proposition~\ref{EquivKKT}.
      \item $x^*$ is a local minimizer of \ref{SparseOpt}.
   \end{enumerate}
\end{thm}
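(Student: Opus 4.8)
The plan is to compute explicitly the Hessians of the two Lagrangians $L^{lin}$ and $L^{sq}$ together with the critical cones (and critical subspaces) of the two reformulations at the KKT point supplied by Proposition~\ref{EquivKKT}, and then to observe that the resulting second-order quadratic forms collapse to (a nonnegative perturbation of) the sparse quadratic form appearing in SP-SOSC. Parts (i) and (ii) follow from this collapse, and part (iii) is then a consequence of standard nonlinear programming theory combined with Theorem~\ref{EquivLocMin}. Freezing the multipliers and differentiating twice in $(x,y)$ yields
\[
   \nabla^2_{(x,y)} L^{lin} = \begin{pmatrix} \nabla_{xx}^2 L^{SP} & \diag(\gamma^*) \\ \diag(\gamma^*) & 0 \end{pmatrix}, \qquad
   \nabla^2_{(x,y)} L^{sq} = \begin{pmatrix} \nabla_{xx}^2 L^{SP} & \diag(\gamma^*) \\ \diag(\gamma^*) & \rho I \end{pmatrix},
\]
the only difference being the $(y,y)$-block, which is $0$ for \ref{PropPap} and $\rho I$ for \ref{ProbNew}.

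Next I would compute the critical cone of each reformulation. Since the bi-active set $\{i \mid x_i^* = y_i^* = 0\}$ is empty by Lemma~\ref{locmin}, the gradient of the complementarity constraint $x_iy_i=0$ equals $(e_i,0)$ for $i \in I_0(x^*)$ and $(0,x_i^* e_i)$ for $i \notin I_0(x^*)$, so its linearization forces $(d_x)_i = 0$ on $I_0(x^*)$ and $(d_y)_i = 0$ off $I_0(x^*)$. For \ref{PropPap} the additional constraint $y_i \le 1$ is active exactly on $I_0(x^*)$ with the strictly positive multiplier $\sigma_i^* = \rho$, which additionally forces $(d_y)_i = 0$ on $I_0(x^*)$; hence $d_y = 0$ identically, and the critical cone is precisely $C^{SPO}(x^*,\lambda^*)\times\{0\}$ (and the critical subspace is $SC^{SPO}(x^*,\lambda^*)\times\{0\}$, differing only by dropping the weakly active $g$-conditions). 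On such directions the off-diagonal block is annihilated and the quadratic form reduces to $d_x^T \nabla_{xx}^2 L^{SP}(x^*,\lambda^*,\mu^*) d_x$, so (strong) SOSC for \ref{PropPap} is literally (strong) SP-SOSC, establishing part (i).

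For \ref{ProbNew} there is no $y \le e$ constraint, so $(d_y)_i$ remains free for $i \in I_0(x^*)$ and the critical cone is $\{(d_x,d_y) \mid d_x \in C^{SPO}(x^*,\lambda^*),\ (d_y)_i = 0\ \forall i \notin I_0(x^*)\}$ (with $SC^{SPO}$ for the subspace). The decisive computation is that, on this cone, every cross term $\gamma_i^*(d_x)_i(d_y)_i$ vanishes, because $(d_x)_i = 0$ for $i \in I_0(x^*)$ while $(d_y)_i = 0$ otherwise, so the quadratic form decouples into
\[
   d_x^T \nabla_{xx}^2 L^{SP}(x^*,\lambda^*,\mu^*) d_x + \rho \sum_{i \in I_0(x^*)} (d_y)_i^2 .
\]
A two-case argument then finishes part (ii): if $d_x \neq 0$ then $d_x \in C^{SPO}(x^*,\lambda^*)\setminus\{0\}$ and SP-SOSC makes the first term positive while the second is nonnegative; if $d_x = 0$ but $(d_x,d_y) \neq 0$ then $d_y \neq 0$ on $I_0(x^*)$ and the $\rho$-term is strictly positive. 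The strong version is identical with $SC^{SPO}$ in place of $C^{SPO}$. I expect this decoupling, namely that the indefinite off-diagonal coupling disappears on the critical cone and that the $\rho I$ block exactly compensates for the enlarged cone, to be the main point requiring care; it is precisely the reason why SP-SOSC, rather than a stronger condition on the full $(x,y)$-Hessian, is enough.

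Finally, part (iii) follows from standard theory: by part (i) (or (ii)) the KKT point $(x^*,y^*,\lambda^*,\mu^*,\gamma^*,\sigma^*)$ satisfies SOSC for \ref{PropPap} (or \ref{ProbNew}), hence is a strict local minimizer of that smooth reformulation, and Theorem~\ref{EquivLocMin} transfers this to the statement that $x^*$ is a local minimizer of \ref{SparseOpt}. The penalty scaling between \ref{SparseOpt} and \ref{ProbNew} is irrelevant here, since by Proposition~\ref{Prop:PenIndependent} the set of local minimizers of \ref{SparseOpt} is independent of the penalty parameter.
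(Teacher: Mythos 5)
Your proposal is correct and follows essentially the same route as the paper's own proof: identical Hessian computations, the same identification of the critical cones $C^{lin} = C^{SPO}\times\{0\}$ and $C^{sq} = \{(d_x,d_y) \mid d_x \in C^{SPO},\ (d_y)_i = 0 \ \forall i \notin I_0(x^*)\}$, the same observation that $d_x \circ d_y = 0$ kills the cross term so the quadratic form decouples into $d_x^T \nabla_{xx}^2 L^{SP} d_x + \rho\|d_y\|_2^2$, and the same appeal to Theorem~\ref{EquivLocMin} for part (iii). Your explicit two-case argument for (ii) and the remark on penalty scaling via Proposition~\ref{Prop:PenIndependent} are slightly more detailed than the paper's wording, but the substance is the same.
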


\begin{proof}
   For a given S-stationary point $(x^*,\lambda^*,\mu^*)$ let $y^*$, $\gamma^*$, and $\sigma^*$ be chosen as in Proposition~\ref{EquivKKT} and define $z := (x,y)$.
   The Hessian
   matrices of the Lagrangians of problems \ref{PropPap} and \ref{ProbNew} with respect to $z$ are 
   given by 
   \begin{align*} \nabla_{zz}^2 L^{lin} (x^*,y^*,\lambda^*,\mu^*,\gamma^*,\sigma^*)
      &= \begin{pmatrix} \nabla_{xx}^2 L^{SP}(x^*,\lambda^*,\mu^*) & \diag(\gamma^*) \\
      \diag(\gamma^*) & 0\end{pmatrix} \quad \text{and}\\ 
      \nabla_{zz}^2 L^{sq} (x^*,y^*,\lambda^*,\mu^*,\gamma^*)
      &= \begin{pmatrix} \nabla_{xx}^2 L^{SP}(x^*,\lambda^*,\mu^*) 
      & \diag(\gamma^*) \\
      \diag(\gamma^*) & \rho I_n \end{pmatrix},
   \end{align*}
   respectively, where $I_n$ denotes the identity matrix in $\RR^{n \times n}$. 
   Since $y_i^* = 1$ and $ \sigma_i^* = \rho > 0 $ for all $i \in I_0(x^*)$, we obtain the following
   critical cones for the smooth problems \ref{PropPap} and \ref{ProbNew}, respectively:
   \begin{align*}
      C^{lin}(z^*,\lambda^*) = \{ d = (d_x, d_y)^T \mid 
      \nabla g_i (x^*)^T d_x &= 0 \quad \forall i \in I_g(x^*),\ \lambda_i^* > 0, \\
      \nabla g_i(x^*)^T d_x &\le 0 \quad \forall i \in I_g(x^*),\ \lambda_i^* = 0, \\ 
      \nabla h(x^*)^T d_x &= 0, \\
      (d_x)_i &= 0 \quad \forall i \in I_0(x^*), \\
      d_y &= 0 \qquad \qquad \qquad \qquad  \qquad \}, \\
      C^{sq}(z^*,\lambda^*) = \{ d = (d_x, d_y)^T \mid 
      \nabla g_i (x^*)^T d_x &= 0 \quad \forall i \in I_g(x^*),\ \lambda_i^* > 0, \\
      \nabla g_i(x^*)^T d_x &\le 0 \quad \forall i \in I_g(x^*),\ \lambda_i^* = 0, \\ 
      \nabla h(x^*)^T d_x &= 0, \\
      (d_x)_i &= 0 \quad \forall i \in I_0(x^*), \\
      (d_y)_i &= 0 \quad \forall i \notin I_0(x^*) \qquad \qquad \}
   \end{align*}
   and, similarly, the critical subspaces
   \begin{align*}
      SC^{lin}(z^*,\lambda^*) := \{ d = (d_x, d_y)^T \mid
      \nabla g_i (x^*)^T d_x &= 0 \quad \forall i \in I_g(x^*),\ \lambda_i^* > 0, \\
      \nabla h(x^*)^T d_x &= 0, \\
      (d_x)_i &= 0 \quad \forall i \in I_0(x^*), \\
      d_y &= 0 \qquad \qquad \qquad \qquad  \qquad \}, \\
      SC^{sq}(z^*,\lambda^*) := \{ d = (d_x, d_y)^T \mid
      \nabla g_i (x^*)^T d_x &= 0 \quad \forall i \in I_g(x^*),\ \lambda_i^* > 0, \\
      \nabla h(x^*)^T d_x &= 0, \\
      (d_x)_i &= 0 \quad \forall i \in I_0(x^*), \\
      (d_y)_i &= 0 \quad \forall i \notin I_0(x^*) \qquad \qquad \}
   \end{align*}
   For a vector $ d = (d_x, d_y)^T $, we obtain
   \begin{eqnarray}
   \begin{pmatrix}d_x \\ d_y \end{pmatrix}^T \nabla_{zz}^2L^{sq}\begin{pmatrix}d_x \\ d_y\end{pmatrix} 
   & = & \begin{pmatrix}d_x \\ d_y \end{pmatrix}^T \nabla_{zz}^2L^{lin}\begin{pmatrix}d_x \\  
   d_y\end{pmatrix} + \rho \norm{d_y}_2^2 \nonumber \\
   & = & {d_x}^T \nabla_{xx}^2L^{SP}(x^*,\lambda^*,\mu^*) d_x + 2  (\gamma^*)^T (d_x \circ d_y) 
   + \rho \norm{d_y}_2^2. \label{AuxEq1}
   \end{eqnarray}
   Assume $d = (d_x,d_y)^T \in C^{lin}(x^*,\lambda^*)$ is a nonzero vector.
   Then we have
   \[
      d_x \in C^{SPO}(x^*,\lambda^*), \quad d_y = 0 .
   \]
   In particular, this implies $ d_x \neq 0 $.
   According to \eqref{AuxEq1}, the SP-SOSC immediately implies claim (i).
   The proof for strong SOSC is analogous.
   
   \par
   Assume $d = (d_x,d_y)^T \in C^{sq}(x^*,\lambda^*)$ is a nontrivial vector.
   It holds
   \[
      d_x \in C^{SPO}(x^*,\lambda^*), \quad (d_y)_i = 0, \ i \notin I_{0}(x^*).
   \]
   At least one of the two vectors $d_x,d_y$ is nonzero and we know $d_x \circ d_y = 0$.
   Hence SP-SOSC implies $(ii)$, according to inequality \eqref{AuxEq1}.
   Strong SOSC can again be verified analogously.

   Finally, the validity of SOSC for either \ref{PropPap} or \ref{ProbNew} 
   immediately yields $(iii)$ due to the equivalence of local minima.
\end{proof}

\noindent
We next state a second-order necessary optimality condition
for the sparse optimization problem \ref{SparseOpt}, which can be derived via the relation
to the corresponding second-order conditions of one of the two smooth reformulations
\ref{PropPap} or \ref{ProbNew}. Note that this necessary condition will not be used
later, but is stated here for the sake of completeness.

\begin{thm}[Second-Order Necessary Condition] 
   Let $x^*$ be a local minimum of \ref{SparseOpt} satisfying SP-LICQ. Then there exist unique 
   multipliers $(\lambda^*,\mu^*)$ such that $(x^*,\lambda^*,\mu^*)$ is an S-stationary point 
   of \ref{SparseOpt} satisfying the second-order necessary condition
   \[
      d^T  \nabla_{xx} L^{SP}(x^*,\lambda^*,\mu^*)  d \geq 0, 
      \quad \forall \ d \in C^{SPO}(x^*,\lambda^*).
   \]
\end{thm}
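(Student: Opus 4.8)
The plan is to transfer the classical second-order necessary condition from one of the two smooth reformulations back to \ref{SparseOpt}; I would work with \ref{PropPap}, since there the critical cone collapses most conveniently. First I would invoke Theorem~\ref{EquivLocMin}: since $x^*$ is a local minimum of \ref{SparseOpt}, there exists $y^*$ such that $(x^*,y^*)$ is a local minimum of \ref{PropPap}, and by Lemma~\ref{locmin} this $y^*$ necessarily satisfies $y_i^*=1$ for $i\in I_0(x^*)$ and $y_i^*=0$ otherwise. In particular the bi-active set $\{i \mid x_i^*=y_i^*=0\}$ is empty, so the hypothesis of Theorem~\ref{EquivLICQ} is met and SP-LICQ at $x^*$ is equivalent to standard LICQ at $(x^*,y^*)$ for \ref{PropPap}.

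Next I would apply the classical theory of smooth nonlinear programming to \ref{PropPap}. Since $(x^*,y^*)$ is a local minimum at which standard LICQ holds, there exist unique KKT multipliers $(\lambda^*,\mu^*,\gamma^*,\sigma^*)$, and the standard second-order necessary condition holds:
\[
   d^T \nabla_{zz}^2 L^{lin}(x^*,y^*,\lambda^*,\mu^*,\gamma^*,\sigma^*)\, d \ge 0 \qquad \forall\, d \in C^{lin}(z^*,\lambda^*),
\]
where $z^*=(x^*,y^*)$. By Proposition~\ref{EquivKKT} and Theorem~\ref{Thm:EquivStat}, this KKT point corresponds to $x^*$ being S-stationary for \ref{SparseOpt} with multipliers $(\lambda^*,\mu^*)$; moreover, uniqueness of the full multiplier tuple (guaranteed by LICQ) yields uniqueness of $(\lambda^*,\mu^*)$, which settles the first-order part of the claim.

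It then remains to reduce the second-order inequality to $C^{SPO}$. Here I would reuse the computation from the proof of Theorem~\ref{Thm:SP-SOSC}: on $C^{lin}(z^*,\lambda^*)$ one has $d_y=0$, so the cross term $2(\gamma^*)^T(d_x\circ d_y)$ drops out of \eqref{AuxEq1} and $d^T \nabla_{zz}^2 L^{lin} d = d_x^T \nabla_{xx}^2 L^{SP}(x^*,\lambda^*,\mu^*)\, d_x$. The remaining defining conditions of $C^{lin}$, restricted to the $x$-block, are precisely those of $C^{SPO}(x^*,\lambda^*)$, so the assignment $d_x \mapsto (d_x,0)$ is a bijection between $C^{SPO}(x^*,\lambda^*)$ and $C^{lin}(z^*,\lambda^*)$. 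Pushing the displayed inequality through this bijection gives $d_x^T \nabla_{xx}^2 L^{SP}(x^*,\lambda^*,\mu^*)\, d_x \ge 0$ for every $d_x \in C^{SPO}(x^*,\lambda^*)$, which is the asserted condition.

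The main obstacle is the careful bookkeeping of the critical-cone correspondence: one must confirm that no direction of $C^{SPO}$ is lost and that the $y$-block of $C^{lin}$ is forced to vanish entirely, a fact that relies on $\sigma_i^* = \rho > 0$ for $i \in I_0(x^*)$ making the constraints $y_i \le 1$ strictly active (and on the complementarity gradients pinning down $(d_y)_i$ for $i \notin I_0(x^*)$). Since both the cone description and the Hessian reduction were already established in the proof of Theorem~\ref{Thm:SP-SOSC}, this step amounts to invoking those identities in the reverse direction rather than to genuinely new work.
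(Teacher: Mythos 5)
Your proof is correct, and it follows the same overall strategy as the paper -- transfer the classical second-order necessary condition from a smooth reformulation back to \ref{SparseOpt} via the equivalence results -- but it routes through \ref{PropPap}, whereas the paper works with \ref{ProbNew}. The mechanics differ in a small but instructive way. In your version, the critical cone $C^{lin}(z^*,\lambda^*)$ forces $d_y=0$ entirely (on $I_0(x^*)$ because the constraints $y_i\le 1$ are active with multiplier $\sigma_i^*=\rho>0$, and on the complement because $x_i^*\neq 0$ in the gradient of $x_i y_i = 0$), so the Hessian block structure of $L^{lin}$ collapses exactly: the cross term $2(\gamma^*)^T(d_x\circ d_y)$ vanishes and $d_x\mapsto(d_x,0)$ is a genuine bijection between $C^{SPO}(x^*,\lambda^*)$ and $C^{lin}(z^*,\lambda^*)$, so the two second-order conditions are literally identical. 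In the paper's version, the cone $C^{sq}(z^*,\lambda^*)$ leaves $(d_y)_i$ free for $i\in I_0(x^*)$, and the Hessian of $L^{sq}$ carries the extra block $\rho I_n$; the quadratic form becomes $d_x^T\nabla^2_{xx}L^{SP}d_x+\rho\norm{d_y}_2^2$ (using $d_x\circ d_y=0$), and the paper only needs the inclusion $C^{SPO}\times\{0\}\subseteq C^{sq}$, discarding the nonnegative surplus term by setting $d_y=0$. Both arguments are equally rigorous; yours buys the cleaner cone correspondence (an equality of quadratic forms rather than an inequality-plus-inclusion), at the price of invoking the inequality constraints $y\le e$ and their strictly positive multipliers $\sigma^*$, which the \ref{ProbNew} route avoids altogether -- a point the paper cares about elsewhere, since \ref{ProbNew} is purely equality-constrained when $X$ contains no inequalities.
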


\begin{proof}
   The existence and uniqueness of the multipliers $ ( \lambda^*, \mu^* ) $ such that the 
   triple $ (x^*,\lambda^*,\mu^*) $ satisfies the S-stationarity conditions is an immediate
   consequence of Theorems~\ref{Thm:EquivStat} and \ref{EquivLICQ}.

   Furthermore, we know from these results that there exist (uniquely defined)
   vectors $ y^* $ and $ \sigma^* $ such that $(x^*,y^*,\lambda^*, \mu^*, \sigma^*)$ is a KKT
   point of \ref{ProbNew} satisfying standard LICQ, and with $(x^*,y^*)$
   being a local minimizer of \ref{ProbNew}, cf.\ Theorem~\ref{EquivLocMin}. Hence the
   standard second-order necessary optimality condition holds for \ref{ProbNew}, i.e., we have
   \[
      \begin{pmatrix}d_x \\ d_y \end{pmatrix}^T \begin{pmatrix} \nabla_{xx}^2 L^{SP}(x^*,\mu^*,\lambda^*) & \diag(\gamma^*) \\
      \diag(\gamma^*) & \rho I_n \end{pmatrix} \begin{pmatrix} d_x \\ d_y \end{pmatrix} \geq 0, \quad \forall 
      \begin{pmatrix}d_x \\ d_y \end{pmatrix} \in C^{sq}(x^*,\lambda^*).
   \]
   This is equivalent to
   \begin{equation} 
      (d_x)^T \ \nabla_{xx}^2 L^{SP}(x^*,\mu^*,\lambda^*)\ d_x + \rho \norm{d_y}_2^2 \geq 0, \quad 
      \forall 
      \begin{pmatrix}d_x \\ d_y \end{pmatrix} \in C^{sq}(x^*,\lambda^*),\label{Aux:eqSONC}
   \end{equation}
   where we used the fact that $(d_x)_i \cdot (d_y)_i = 0$, cf.\ the previous proof. Now it is easy to 
   see that any vector $d = (d_x, d_y)^T$ with
   $d_x \in C^{SPO}(x^*,\lambda^*)$ and $d_y = 0$ is contained in $C^{sq}(x^*,\lambda^*)$. In view of \eqref{Aux:eqSONC}, this directly yields
   \begin{equation*}
      (d_x)^T \ \nabla_{xx}^2 L^{SP}(x^*,\mu^*,\lambda^*)\ d_x \geq 0, \quad  \forall d_x \in C^{SPO}(x^*,\lambda^*).
   \end{equation*}
   This completes the proof.
\end{proof}
%
%

\noindent
Note that there exist more general second-order conditions for standard nonlinear
programs, see, e.g., \cite{PerturbationAnalysis}. In principle, it is possible to translate
these conditions also to problem-tailored second-order optimality conditions for the
sparse optimization problem \ref{SparseOpt} due to its relation to the standard
second-order optimality conditions to one of the reformulated smooth problems
\ref{PropPap} or \ref{ProbNew}. We omit the corresponding details.

\section{Lagrange-Newton-type Methods}\label{Sec:LagrangeNewton}

The aim of this section is to present some Lagrange-Newton-type methods
for the (local) solution of the sparse optimization problem \ref{SparseOpt}. The idea
is to use one of our smooth reformulations and to apply a Newton-type method to
the corresponding KKT conditions. In principle, we could take either the 
reformulation \ref{PropPap} or the one from \ref{ProbNew}. Here we decide to
consider the reformulation \ref{ProbNew} which, in particular, has the advantage
that the corresponding KKT conditions consist of nonlinear equations only if
the original problem \ref{SparseOpt} contains not inequalities. This observation
might be useful for Lagrange-Newton-type approaches. Nevertheless, the theory
also covers the case where inequality constraints are present.

More precisely, we  consider three different Newton-type methods: First, we 
take the full KKT system of \ref{ProbNew} and investigate the local convergence properties
of a corresponding nonsmooth Newton method applied to this system. Second, we 
consider a reduced variant of this method which eliminates the $ y $-variables and
show that it converges under the same set of assumptions as the previous approach.
Third, we deal with a method which tries to overcome some singularity problems
for some classes of sparse optimization problems which include nonnegativity constraints.

All three methods are using suitable {\em NCP-functions} $ \varphi: \RR^2 \to \RR $, which
are defined by the property
\begin{equation*}
   \phi (a,b) = 0 
   \qquad \Longleftrightarrow \qquad 
   a \geq 0, \;  b \geq 0, \; ab= 0 .
\end{equation*}
Two prominent examples are the {\em minimum function} and the {\em Fischer-Burmeister function}
\begin{equation*}
   \phi_m (a,b) := \min \{ a, b \}
   \qquad \text{and} \qquad
   \phi_{FB} (a,b) := \sqrt{a^2 + b^2} - a - b.
\end{equation*}
We need some background from nonsmooth analysis:
Given a locally Lipschitz continuous mapping $ T: \RR^n \to \RR^n $,
Rademacher's Theorem implies that $ T $ is almost everywhere differentiable. Hence the
set
\begin{equation*}
   \partial_B T(x) := \big\{ H \ \big| \ \exists \{ x^k \} \subseteq D_T: x^k \to x \text{ and }
   T'(x^k) \to H \big\} 
\end{equation*} 
is nonempty and bounded, where $ D_T $ denotes the set of differentiable points of $ T $.
The set $ \partial_B T(x) $ is called the {\em B-subdifferential} of $ T $ in $ x $, its
convex hull gives the {\em generalized Jacobian} $ \partial T(x) $ by Clarke \cite{Cla90}.
A point $ x $ is called {\em BD-regular}, if all elements in $ \partial_B T(x) $ are 
nonsingular. The nonsmooth Newton method 
\begin{equation*}
   x^{k+1} := x^k - H_k^{-1} \cdot T(x^k) \quad \forall k = 0, 1, 2, \ldots \quad \text{with} \quad H_k \in 
   \partial_B T(x^k)
\end{equation*}
for the solution of the nonlinear system of equations $ T(x) = 0 $ is known to be
superlinearly or quadratically convergent to a solution $ x^* $, if the solution is 
BD-regular and $ T $ satisfies an additional smoothness property called {\em semismoothness}
and {\em strong semismoothness}, respectively. For the precise definitions and proofs of
the previous statements, the interested reader is referred to the papers \cite{Qi93,QiS93}
and the monograph \cite{Facchinei}.

Throughout this section, we assume that all functions $ f, g, h $ are (at least)
twice continuously differentiable. Furthermore, $ \phi $ denotes either the minimum or
the Fischer-Burmeister function, unless we state something else explicitly.

The first Newton-type method presented in this section uses the operator
\[
   T(x,y,\lambda,\mu,\gamma) := \begin{pmatrix} 
      \nabla_x L^{SP}(x,\lambda,\mu) +  \gamma \circ y \\
      \rho(y-e) + \gamma \circ x \\
      \Phi_g(x,\lambda) \\
      h(x) \\
      x\circ y 
   \end{pmatrix} , 
\]
where $\Phi_g$ is defined componentwise by
\[
   (\Phi_g)_i(x,\lambda) = \phi(-g_i(x), \lambda_i) .
\]
Due to the defining property of an  NCP-function,
it follows that $ ( x^*, y^*, \lambda^*, \mu^*, \gamma^* ) $ is a KKT point of
the reformulated problem \ref{ProbNew} if and only if it solves the (in general nonsmooth)
system of equations $ T (x,y, \lambda, \mu, \gamma ) = 0 $. Furthermore, it is known that
the operator $ T $ is semismooth, and strongly semismooth if, in addition, the second-order
derivatives of $ f, g, h $ are locally Lipschitz continuous. In order to verify the 
local fast convergence of the corresponding nonsmooth Newton iteration
\[
   z^{k+1} := z^k - H_k^{-1} \cdot T(z^k), \quad \forall k = 0, 1, 2, \ldots,
\]  
with an arbitrary element $ H_k \in \partial_B T(z^k) $ and $ z^k: = (x^k,y^k,\lambda^k,\mu^k,
\gamma^k) $, it therefore suffices to verify the BD-regularity of a solution $ z^* $ of this system.
This is done in the following result.

\begin{thm}\label{TKKT}
   Let $z^* = (x^*,y^*,\lambda^*,\mu^*,\gamma^*)$ be a solution of $T(z)=0$ such that the following
   assumptions hold:
   \begin{enumerate}[label=(\roman*)]
      \item SP-LICQ is satisfied at $x^*$.
      \item Strong SP-SOSC is satisfied at $(x^*,\lambda^*,\mu^*)$.
   \end{enumerate}
   Then $ z^* $ is a BD-regular point of $ T $.
\end{thm}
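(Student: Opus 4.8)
The plan is to fix an arbitrary element $H \in \partial_B T(z^*)$ and show that the homogeneous system $Hd = 0$ has only the trivial solution; since $H$ is arbitrary, this is exactly BD-regularity. I would write the direction conformally as $d = (d_x, d_y, d_\lambda, d_\mu, d_\gamma)$. Because $f, g, h$ are twice continuously differentiable, every block of $T$ is $C^1$ except the $\Phi_g$-row, so the nonsmoothness is confined there: an element $H$ is determined by choosing, for each inequality index $i$, a pair $(\alpha_i, \beta_i) \in \partial_B \phi(-g_i(x^*), \lambda_i^*)$, which contributes $-\alpha_i \nabla g_i(x^*)^T$ in the $x$-columns and $\beta_i$ in the $\lambda_i$-column. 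Writing $Hd = 0$ block by block then produces five linear relations mirroring the derivatives of the five rows of $T$.

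First I would exploit the complementarity rows. The row from $x \circ y$ reads $y_i^*(d_x)_i + x_i^*(d_y)_i = 0$; combined with $y_i^* = 1, x_i^* = 0$ on $I_0(x^*)$ and $y_i^* = 0, x_i^* \neq 0$ off $I_0(x^*)$ (Proposition~\ref{EquivKKT}), this forces $(d_x)_i = 0$ on $I_0(x^*)$ and $(d_y)_i = 0$ off $I_0(x^*)$. Substituting into the row from $\rho(y-e) + \gamma \circ x$ on $I_0(x^*)$, where $x_i^* = 0$, gives $\rho (d_y)_i = 0$, so $d_y = 0$ entirely; off $I_0(x^*)$ the same row expresses $(d_\gamma)_i$ through $(d_x)_i$ via $\gamma_i^* = \rho/x_i^*$. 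From the $h$-row I read $\nabla h(x^*)^T d_x = 0$, and a short case analysis of $\partial_B \phi$ at the three index types (inactive, where $\alpha_i = 0$; strictly active, where $\beta_i = 0$; and biactive) yields $(d_\lambda)_i = 0$ on inactive indices and $\nabla g_i(x^*)^T d_x = 0$ on strictly active indices. Together with $(d_x)_i = 0$ on $I_0(x^*)$ this places $d_x$ in the critical subspace $SC^{SPO}(x^*, \lambda^*)$.

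The decisive step is to test the gradient-of-Lagrangian row with $d_x$. Multiplying it by $d_x^T$ and using $d_y = 0$, $\nabla h(x^*)^T d_x = 0$, and $(d_x)_i = 0$ on $I_0(x^*)$ (which annihilates the $\diag(y^*) d_\gamma$ contribution) collapses it to $d_x^T \nabla_{xx}^2 L^{SP}(x^*,\lambda^*,\mu^*) d_x + \sum_{i\ \text{biactive}} (d_\lambda)_i (\nabla g_i(x^*)^T d_x) = 0$. Here the NCP structure is essential: the $\Phi_g$-row gives $\alpha_i (\nabla g_i(x^*)^T d_x) = \beta_i (d_\lambda)_i$, and the sign pattern of $\partial_B \phi$ at a complementary point (for $\phi_m$ the two candidate gradients, for $\phi_{FB}$ the fact that both partials are nonpositive) makes each product $(d_\lambda)_i (\nabla g_i(x^*)^T d_x)$ nonnegative. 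Hence $d_x^T \nabla_{xx}^2 L^{SP}(x^*,\lambda^*,\mu^*) d_x \leq 0$ while $d_x \in SC^{SPO}(x^*, \lambda^*)$, so strong SP-SOSC forces $d_x = 0$. I expect this sign bookkeeping over the biactive indices to be the main obstacle, since it is the only place where the choice of NCP-function and the degenerate branch of its subdifferential matter, and it is precisely what dictates working with the critical subspace, hence strong SP-SOSC, rather than the critical cone.

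Finally, with $d_x = 0$ and $d_y = 0$, I would back-substitute: the off-$I_0(x^*)$ relation yields $(d_\gamma)_i = 0$ there, and the gradient-of-Lagrangian row reduces to $\sum_{i \in I_g(x^*)} (d_\lambda)_i \nabla g_i(x^*) + \sum_{j=1}^p (d_\mu)_j \nabla h_j(x^*) + \sum_{i \in I_0(x^*)} (d_\gamma)_i e_i = 0$. By SP-LICQ these vectors are linearly independent, so all coefficients vanish, giving $d_\lambda = 0$ (together with the inactive indices already handled), $d_\mu = 0$, and $(d_\gamma)_i = 0$ on $I_0(x^*)$, whence $d_\gamma = 0$. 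Therefore $d = 0$, so $H$ is nonsingular; as $H \in \partial_B T(z^*)$ was arbitrary, $z^*$ is BD-regular.
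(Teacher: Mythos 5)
Your proposal is correct, but it proves the theorem by a genuinely different route than the paper. The paper's own proof is a short reduction: since $z^*$ is a KKT point of \ref{ProbNew}, the bi-active set $\{i \mid x_i^* = y_i^* = 0\}$ is empty, so assumption (i) together with Theorem~\ref{EquivLICQ} yields standard LICQ for \ref{ProbNew} at $z^*$, and assumption (ii) together with Theorem~\ref{Thm:SP-SOSC} yields standard strong SOSC for \ref{ProbNew}; the conclusion is then obtained by citing known results on semismooth Newton methods for NCP-function-based KKT systems \cite{FaK98,Facchinei,Fis92}, which state that LICQ plus strong SOSC at a KKT point imply nonsingularity of every element of the B-subdifferential. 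What you do instead is re-prove that literature result from scratch, specialized to the operator $T$: fixing $H \in \partial_B T(z^*)$, using the complementarity rows and Proposition~\ref{EquivKKT} to force $d_y = 0$ and to tie $d_\gamma$ to $d_x$ off $I_0(x^*)$, classifying the inequality indices by the branches of $\partial_B\phi$, concluding $d_x \in SC^{SPO}(x^*,\lambda^*)$, exploiting the sign structure of $\partial_B\phi$ at biactive indices (both Fischer--Burmeister partials nonpositive and not simultaneously zero; the two candidate gradients of the minimum function) to obtain $d_x^T \nabla^2_{xx} L^{SP}(x^*,\lambda^*,\mu^*)\, d_x \le 0$, hence $d_x = 0$ by strong SP-SOSC, and finally invoking SP-LICQ to annihilate the multiplier components. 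Your version is self-contained and makes transparent exactly where each hypothesis enters; in particular, it explains \emph{why} the theorem needs strong SP-SOSC rather than SP-SOSC, namely because biactive indices impose no sign restriction on $\nabla g_i(x^*)^T d_x$ in the homogeneous system, so only the critical subspace, not the cone, is available. The paper's argument buys brevity and reuses the equivalence machinery it has already built (Theorems~\ref{EquivLICQ} and~\ref{Thm:SP-SOSC}), at the price of outsourcing precisely this linear-algebra and sign bookkeeping to the cited references; your proof is essentially what those references establish, transplanted to $T$.
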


\begin{proof}
   Based on our previous result, the statement can be traced back to existing results in
   the literature. Since $z^*$ is a KKT point of \ref{ProbNew}, we know 
   that the bi-active set $ \{ i \ | \ x_i^* = 0 = y_i^* \} $ is empty. Therefore, it 
   follows from assumption $ (i) $ and Theorem~\ref{EquivLICQ} 
   that ordinary LICQ holds for \ref{ProbNew} at $z^*$. Similarly, assumption $ (ii) $
   and Theorem~\ref{Thm:SP-SOSC} imply that the strong second-order sufficiency conditions holds 
   for \ref{ProbNew} at $z^*$. Standard results on the local convergence of nonsmooth
   Newton methods then imply that all elements $ H \in \partial_B T (z^*) $ are 
   nonsingular, see, e.g., \cite{FaK98,Facchinei,Fis92}.
\end{proof}

\noindent
We next consider a reduced formulation of the system $ T(z) = 0 $. To this end, note that
$ T(z) = 0 $ immediately gives
\begin{equation}\label{Eq:replacement}
   y = e - \frac{\gamma \circ x}{\rho} ,
\end{equation}
cf.\ \eqref{KKTnew}. Hence, eliminating the variable $ y $ in the definition of $ T $ by
replacing it using the above expression, we obtain the reduced operator
\[
   T_{red}(x,\lambda,\mu,\gamma) = \begin{pmatrix}
      \nabla_x L^{SP}(x,\lambda,\mu) + \gamma \circ (e - \frac{\gamma \circ x}{\rho}) \\
      \Phi(-g(x),\lambda)\\
      h(x) \\
      x \circ (e - \frac{\gamma \circ x}{\rho})
   \end{pmatrix},
\]
which is independent of $ y $. In view of its derivation, it still holds that any
zero of $T_{red}$ yields a KKT point of \ref{ProbNew} and vice versa, whenever the 
variable $y$ is defined as above. In order to locally solve the KKT system of 
\ref{ProbNew}, we can therefore, alternatively, apply a nonsmooth Newton method 
to the system $T_{red} (w) = 0 $, where $ w = (x, \lambda, \mu, \gamma) $. The central
point for the local fast convergence of this approach is the BD-regularity of a solution.
Here, the following result holds.

\begin{thm}\label{RedBD}
   $T$ is BD-regular in $(x,y,\lambda,\mu,\gamma)$ with $y = (e - \gamma \circ x /\rho)$ if and only if
   $T_{red}$ is BD-regular in $(x,\lambda,\mu,\gamma)$.
\end{thm}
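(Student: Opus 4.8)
The plan is to treat Theorem~\ref{RedBD} as an instance of the elementary fact that eliminating a block of variables through a smooth, \emph{solvable} equation preserves nonsingularity of the (generalized) Jacobian, and then to check that this elimination does not interfere with the nonsmoothness of the operator. The first observation I would record is that the only nonsmooth ingredient of both $T$ and $T_{red}$ is the block $\Phi_g(x,\lambda)$, which depends on $(x,\lambda)$ alone; every other component, and in particular the relation $\rho(y-e)+\gamma\circ x=0$ of \eqref{KKTnew} used to define $y=e-\gamma\circ x/\rho$ in \eqref{Eq:replacement}, is smooth (indeed polynomial). Consequently $T$ is differentiable at a point $z=(x,y,\lambda,\mu,\gamma)$ if and only if $\Phi_g$ is differentiable at $(x,\lambda)$, and exactly the same criterion governs differentiability of $T_{red}$ at $(x,\lambda,\mu,\gamma)$. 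This places the differentiable points of the two operators in bijective correspondence under the smooth substitution, which is what will let the two B-subdifferentials be compared.

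Next I would fix an element $H\in\partial_B T(z^*)$ and, after reordering so that the second equation $F_2:=\rho(y-e)+\gamma\circ x$ and the variable $y$ occupy the trailing block (a permutation that does not affect nonsingularity), write it as
\[
   H = \begin{pmatrix} A & B \\ C & D \end{pmatrix},
\]
where $A$ collects the partial derivatives of the remaining four blocks with respect to $v:=(x,\lambda,\mu,\gamma)$, $B=\partial F_{rest}/\partial y$, $C=\partial F_2/\partial v$, and $D=\partial F_2/\partial y$. Since $F_2$ is linear, $D=\rho I_n$ is invertible and, together with $B$ (whose nonzero rows are $\diag(\gamma)$ and $\diag(x)$) and $C$, it is independent of the particular B-subdifferential selection: the selection only affects the $\Phi_g$-row of $A$. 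The core computation is then twofold. By the Schur complement identity, $\det H = \rho^n\det\!\big(A-\tfrac1\rho BC\big)$, so $H$ is nonsingular precisely when $A-\tfrac1\rho BC$ is; and by the chain rule applied to $T_{red}(v)=F_{rest}\big(v,\,e-\gamma\circ x/\rho\big)$, this Schur complement equals the Jacobian of $T_{red}$ at $v^*$, because $\partial y/\partial v=-D^{-1}C$ is exactly the derivative of the substitution.

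Combining these, I would argue that the assignment $H\mapsto A-\tfrac1\rho BC$ is a bijection from $\partial_B T(z^*)$ onto $\partial_B T_{red}(v^*)$: it is well defined since $B,C,D$ are common to all selections, it lands in $\partial_B T_{red}(v^*)$ because the differentiable points correspond under the smooth elimination and the Jacobians converge blockwise, and it is surjective by reversing the construction. Because this map carries nonsingular matrices to nonsingular matrices and back (Schur complement with the invertible, constant block $D=\rho I_n$), BD-regularity of $T$ at $z^*$ is equivalent to BD-regularity of $T_{red}$ at $v^*$, which is the assertion.

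The step I expect to require the most care is the middle one: justifying that $\partial_B T_{red}(v^*)$ is \emph{exactly} the Schur-complement image of $\partial_B T(z^*)$, i.e.\ that passing to the B-subdifferential commutes with the smooth elimination of $y$. This hinges on the nonsmoothness living entirely in the $(x,\lambda)$-dependent block $\Phi_g$ while the eliminated equation and the substitution are smooth, so that the limiting Jacobians defining the two B-subdifferentials are driven by the \emph{same} sequences in $(x,\lambda)$; I would make this precise by showing the differentiable-point sets coincide and that $B,C,D$ contribute their continuous values in every limit, leaving only the $\Phi_g$-block free to vary. The remaining algebra—reading off the explicit entries $\diag(\gamma)$, $\diag(x)$, $\rho I_n$ and the Schur-complement determinant formula—is routine.
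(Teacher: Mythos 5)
Your proposal is correct and follows essentially the same route as the paper: both arguments rest on the fact that $\partial_B T$ and $\partial_B T_{red}$ are parameterized by the same selections $(J_1\Phi_g,J_2\Phi_g)\in\partial_B\Phi_g(x,\lambda)$ (the only nonsmooth block), combined with block elimination of the $y$-variable against the linear equation $\rho(y-e)+\gamma\circ x=0$ evaluated at $y=e-\gamma\circ x/\rho$. The only cosmetic difference is that the paper carries out this elimination on kernel vectors of the system $Hd=0$ (solving for $d_y$ and substituting), whereas you package the identical algebra as a Schur-complement determinant identity $\det H=\rho^n\det\bigl(A-\tfrac{1}{\rho}BC\bigr)$ together with the chain rule identifying $A-\tfrac{1}{\rho}BC$ with the reduced Jacobian.
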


\begin{proof}
   Let  $w = (x,\lambda,\mu,\gamma)$ and $z = (x,y,\lambda, \mu,\gamma)$ with 
   $y = e - \gamma \circ x / \rho $. The definition of the B-subdifferential then yields
   \[ 
      H \in \partial_B T(z) \quad \Longleftrightarrow \quad H = 
      \begin{pmatrix}
         \nabla_{xx}^2 L^{SP}(x,\lambda,\mu) & \diag(\gamma) & g'(x)^T &
         h'(x)^T & \diag(y) \\
         \diag(\gamma) & \rho I_n & 0 & 0 & \diag(x) \\
         J_1 \Phi_g & 0 & J_2 \Phi_g & 0 & 0 \\
         h'(x) & 0 & 0 & 0 & 0 \\
         \diag(y) & \diag(x) & 0 & 0 & 0
      \end{pmatrix} ,\]
   and, similarly, $ H_{red} \in \partial_B T_{red}(w)$ if and only if
   \begin{align*} 
      H_{red} =
      \begin{pmatrix} 
         \nabla_{xx}^2 L^{SP}(x,\lambda,\mu) - \frac{\diag(\gamma)^2}{\rho} & 
         g'(x)^T & h'(x)^T & 
         \diag(e - \frac{2 \gamma \circ x}{ \rho})  \\
         J_1 \Phi_g & J_2\Phi_g & 0 & 0\\
         h'(x) & 0 & 0 & 0 \\
         \diag(e - \frac{2 \gamma \circ x}{\rho}) & 0 & 0 & -\frac{\diag(x)^2}{\rho}
   \end{pmatrix} ,
   \end{align*}
   with $(J_1\Phi_g, J_2\Phi_g) \in \partial_B \Phi_g(x,\lambda)$. 
   Assume $ w $ is BD-regular for $T_{red}$. 
   Let $H \in \partial_B T(z)$ and consider the system
   \begin{equation} 
      H  d = 0 \quad \text{with appropriately partitioned} \quad 
      d = (d_x,d_y,d_{\lambda},d_{\mu},d_{\gamma}).
   \end{equation}
   Solving for $d_y$ explicitly and plugging in $y = e - \gamma \circ x / \rho$ yields
   \begin{align}
      \begin{split}
         \frac{1}{\rho} ( - \gamma \circ d_x - x\circ d_{\gamma}) - d_y & = 0, \\
         \begin{pmatrix}
            \nabla_{xx}^2 L^{SP}(x,\lambda,\mu) - \frac{\diag(\gamma)^2}{\rho} & 
            g'(x)^T & h'(x)^T & 
            \diag(e - \frac{2 \gamma \circ x}{ \rho}) \\
            J_1 \Phi_g & J_2\Phi_g & 0 & 0\\
            h'(x) & 0 & 0 & 0 \\
            \diag(e - \frac{2 \gamma \circ x}{\rho}) & 0 & 0 & -\frac{\diag(x)^2}{\rho} 
         \end{pmatrix} 
         \begin{pmatrix} d_x \\ d_{\lambda} \\ d_{\mu} \\ d_{\gamma} \end{pmatrix} & = 0 \ .
      \end{split}
   \end{align}
   BD-regularity of $T_{red}$ implies $(d_x,d_{\lambda},d_{\mu},d_{\gamma}) = (0,0,0,0)$ and therefore also
   $d_y = 0$. Hence $H$ is nonsingular. Since this holds for arbitrary $H 
   \in \partial_B T(z) $, the BD-regularity of $T$ in $z$ follows.

   The proof of the converse statement is similar: Assume $T_{red}$ is not BD-regular in $w$.
   Then there is a singular matrix $H^*_{red} \in \partial_B T_{red}(w^*)$, i.e., there exists
   $(J_1\Phi^*_g,J_2 \Phi^*_g) \in \partial_B \Phi_g(x^*,\lambda^*)$ such that 
   the corresponding element $H^*_{red}$ is singular.
   This means that there is a nontrivial element $ d_0 = (d_0^1, d_0^3, d_0^4, d_0^5)^T \in 
   \text{ker}(H^*_{red}) $. Setting $ d_0^2 := \frac{1}{\rho} ( - \gamma \circ d_0^1 - x\circ d_0^5) $
   and reversing the previous arguments, we obtain
   a singular element in $ \partial_B T(z) $.
\end{proof}

\noindent
Note that the assumption $ y = (e - \gamma \circ x /\rho) $ used in Theorem~\ref{RedBD}
holds automatically at any KKT point. Theorem~\ref{RedBD} therefore allows to
translate the result from Theorem~\ref{TKKT} directly to the reduced operator $T_{red}$.
A potential disadvantage of the reduced formulation is the fact that the replacement
of the variable $ y $ by the expression \eqref{Eq:replacement} increases the nonlinearity of
the resulting operator $ T_{red} $.

Finally, we turn to a third Newton-type method for the solution of sparse optimization
problems \ref{SparseOpt}, whose feasible set $X$ contains nonnegativity constraints for some or all variables.
For notational simplicity, we consider only the fully nonnegative case
\[ 
   x \ge 0 .
\]
In our general approach, we have to view these constraints as part of the inequalities $ g(x) \leq 0 $, which causes problems with the constraint qualification. SP-LICQ would require
the linear independence of the gradient vectors $ -e_i $ (resulting from the 
constraint $ x_i \geq 0 $ as an inequality) and $ e_i $ (resulting from the
sparsity in the definition of SP-LICQ) for all $ i \in I_0(x^*) $, which is obviously
impossible.

We can overcome this situation in the following way: In any local minimum of 
\ref{ProbNew}, we have $ y \geq 0 $ according to Lemma \ref{locmin}. 
Together with the constraint $ x \circ y = 0 $ and the nonnegativity constraint $ x \geq 0 $ we thus obtain the full complementarity conditions
$ x \geq 0, y \geq 0, x \circ y = 0 $, which we can replace by an NCP-function $\Phi(x,y) = 0$ with
$ \Phi_i(x,y) = \phi(x_i,y_i) $ for all $ i = 1, \ldots, n$.
The constraints $x \geq 0$ then do not need to be considered as a part of the standard
inequality constraints $ g(x) \leq 0 $ any more. This motivates to consider the 
nonlinear system of equations
\[ 
   T_C(x,y, \lambda, \mu, \gamma ) = 0 \quad \text{with} \quad
   T_C (x,y,\lambda,\mu,\gamma) := \begin{pmatrix}
      \nabla_x L^{SP}(x,\lambda,\mu) + \gamma \circ y \\
      \rho (y - 1) +  \gamma \circ x \\
      \Phi_g(x,\lambda)\\
      h(x) \\
      \Phi(x,y)
   \end{pmatrix} , 
\]
with two NCP-functions $\Phi_g, \Phi$. Then SP-LICQ is a reasonable assumption for this 
reformulation, and the following result holds.

\begin{thm}
   Let $z^*= (x^*,y^*,\lambda^*,\mu^*,\gamma^*)$ be a solution of $T_C(z)=0$ such that the assumptions
   of Theorem~\ref{TKKT} hold. Then $ z^* $ is a BD-regular point of $T_C$.
\end{thm}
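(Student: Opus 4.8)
The plan is to verify directly that every $H \in \partial_B T_C(z^*)$ is nonsingular, i.e.\ that the homogeneous system $Hd = 0$ with $d = (d_x, d_y, d_\lambda, d_\mu, d_\gamma)$ admits only the trivial solution. The key preliminary observation is that, since $z^*$ is a KKT point of \ref{ProbNew}, the bi-active set $\{i \mid x_i^* = y_i^* = 0\}$ is empty by Lemma~\ref{locmin}; thus $x_i^* = 0,\ y_i^* = 1$ for $i \in I_0(x^*)$, whereas $x_i^* > 0,\ y_i^* = 0$ for $i \notin I_0(x^*)$. Strict complementarity between $x$ and $y$ therefore holds, so the block $\Phi(x,y)$ is \emph{differentiable} at $z^*$ and all nonsmoothness of $T_C$ is confined to the $\Phi_g$-block. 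The resulting B-subdifferential has the same block layout as the one displayed in the proof of Theorem~\ref{RedBD}, except that the last block row is now the fixed Jacobian of $\Phi$; for both the minimum and the Fischer--Burmeister function that row of $Hd=0$ reduces to $(d_x)_i = 0$ for $i \in I_0(x^*)$ and $(d_y)_i = 0$ for $i \notin I_0(x^*)$.

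Next I would propagate this through the remaining rows. The second block row $\gamma^* \circ d_x + \rho\, d_y + x^* \circ d_\gamma = 0$, combined with the reductions above and $\gamma_i^* = \rho/x_i^*$ on $I_0(x^*)^c$, yields $d_y = 0$ everywhere and $(d_\gamma)_i = -\rho (d_x)_i/(x_i^*)^2$ for $i \notin I_0(x^*)$. The $h$-row gives $h'(x^*) d_x = 0$, and the $\Phi_g$-row forces $(d_\lambda)_i = 0$ on inactive constraints and $\nabla g_i(x^*)^T d_x = 0$ on active constraints with $\lambda_i^* > 0$. Together with $(d_x)_i = 0$ on $I_0(x^*)$ this shows $d_x \in SC^{SPO}(x^*,\lambda^*)$.

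The core of the argument is an energy estimate from the $x$-stationarity row. Taking its inner product with $d_x$ and using $d_y = 0$, $h'(x^*) d_x = 0$, and the fact that $(d_x)_i = 0$ on $I_0(x^*)$ annihilates the $\diag(y^*) d_\gamma$ contribution, I expect to obtain $d_x^T \nabla_{xx}^2 L^{SP}(x^*,\lambda^*,\mu^*)\, d_x = -\, d_x^T g'(x^*)^T d_\lambda$. If the right-hand side is $\le 0$, then since $d_x$ lies in the critical subspace, strong SP-SOSC forces $d_x = 0$. Once $d_x = 0$, the second row gives $d_\gamma = 0$ on $I_0(x^*)^c$, the $x$-row collapses to $g'(x^*)^T d_\lambda + h'(x^*)^T d_\mu + \sum_{i \in I_0(x^*)} (d_\gamma)_i e_i = 0$, and SP-LICQ for the constraint system of $T_C$ (the linear independence of the active $\nabla g_i(x^*)$, the $\nabla h_j(x^*)$, and the $e_i$, $i\in I_0(x^*)$, with the $x\ge0$ part no longer sitting in $g$) forces all remaining components to vanish, so $d = 0$.

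The main obstacle is controlling the cross term $d_x^T g'(x^*)^T d_\lambda = \sum_i (\nabla g_i(x^*)^T d_x)(d_\lambda)_i$ for \emph{every} element of $\partial_B \Phi_g$, in particular at bi-active inequalities ($g_i(x^*) = 0$, $\lambda_i^* = 0$), where the summand need not vanish individually. On inactive indices $(d_\lambda)_i = 0$, and on active indices with $\lambda_i^* > 0$ one has $\nabla g_i(x^*)^T d_x = 0$, so only bi-active indices contribute. There I would argue termwise from the corresponding row $a_i(-\nabla g_i(x^*)^T d_x) + b_i (d_\lambda)_i = 0$ with $(a_i,b_i)\in\partial_B\phi(0,0)$: writing $p_i := \nabla g_i(x^*)^T d_x$ and $q_i := (d_\lambda)_i$, for $\phi_m$ one of $p_i,q_i$ is zero, while for $\phi_{FB}$ one has $a_i,b_i \le 0$ with $a_i p_i = b_i q_i$, and in either case $p_i q_i \ge 0$. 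Hence the cross term is $\ge 0$, the sign in the energy estimate is as required, and $d_x = 0$ follows. I expect this sign analysis of the bi-active Fischer--Burmeister terms to be the only genuinely delicate point; everything else is bookkeeping parallel to the treatment of $T$ and $T_{red}$.
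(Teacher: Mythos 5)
Your proof is correct, but it takes a genuinely different route than the paper's. The paper argues by reduction: since the bi-active set is empty, $\Phi$ is continuously differentiable near $(x^*,y^*)$ with componentwise gradients $(\pm 1,0)$ or $(0,\pm 1)$, so any $H_C \in \partial_B T_C(z^*)$ differs from an element of $\partial_B T(z^*)$ only in the scaling of the last block row; left-multiplying by a nonsingular diagonal matrix (entries $\pm 1$ on rows indexed by $I_0(x^*)$, $\pm x_i^*$ on the others, identity elsewhere) maps $H_C$ into $\partial_B T(z^*)$, and nonsingularity then follows in one line from Theorem~\ref{TKKT}. You instead verify BD-regularity of $T_C$ from scratch by the classical kernel argument: the $\Phi$-rows give $(d_x)_i = 0$ on $I_0(x^*)$ and $(d_y)_i = 0$ elsewhere, the second block row gives $d_y = 0$ and ties $d_\gamma$ to $d_x$ off $I_0(x^*)$, the $\Phi_g$- and $h$-rows place $d_x$ in $SC^{SPO}(x^*,\lambda^*)$, the energy identity $d_x^T \nabla_{xx}^2 L^{SP}(x^*,\lambda^*,\mu^*) d_x = -\sum_i \big(\nabla g_i(x^*)^T d_x\big)(d_\lambda)_i$ together with your sign analysis of $\partial_B\phi(0,0)$ at bi-active inequalities (which is correct: $a_i p_i = b_i q_i$ with $a_i, b_i \le 0$, not both zero, does force $p_i q_i \ge 0$ for both $\phi_m$ and $\phi_{FB}$) yields $d_x = 0$ via strong SP-SOSC, and SP-LICQ then annihilates $d_\lambda$, $d_\mu$, and the remaining components of $d_\gamma$. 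In effect you have re-proved, specialized to $T_C$, the literature result (LICQ plus strong SOSC implies BD-regularity of the semismooth KKT system) that the paper invokes as a black box inside Theorem~\ref{TKKT}. Your version buys self-containedness and makes explicit where each hypothesis enters, the bi-active Fischer--Burmeister analysis being the only delicate point, exactly as you anticipated; the paper's version buys brevity and reuse of Theorem~\ref{TKKT}. One small attribution fix: the emptiness of the bi-active set at a zero of $T_C$ follows from Proposition~\ref{EquivKKT} (or directly from the row $\rho(y^*-e) + \gamma^* \circ x^* = 0$, which forces $y_i^* = 1$ whenever $x_i^* = 0$), not from Lemma~\ref{locmin}, which is stated for local minima rather than KKT points.
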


\begin{proof}
   First observe that $ T_C(z^*) = 0 $ implies $ T (z^*) = 0 $, hence $ z^* $ is a KKT 
   point of \ref{ProbNew}. In view of Proposition~\ref{EquivKKT}, we therefore have that the 
   bi-active set $ \{ i \ | \ x_i^* = y_i^* = 0 \} $ is empty. This implies that $ \Phi $ 
   is continuously differentiable in a neighborhood of $ (x^*,y^*) $, with
   componentwise derivatives
   given by (recall that $\Phi $ is defined either by the Fischer-Burmeister
   function or by the minimum function)
   \begin{align*}
   \nabla \phi_{FB}(x_i^*, 0) &= (0, -1)^T \quad \text{and} \quad \nabla \phi_{m} (x_i^*, 0) = (0, 1)^T, \\
   \nabla \phi_{FB}(0, y_i^*) &=(-1, 0)^T \quad \text{and} \quad \nabla \phi_{m} (0, y_i^*) = (1, 0)^T.
   \end{align*}
   Thus, each element $H_C \in \partial_B T_C (z^*)$ can be written as:
   \[ 
      H_C = \begin{pmatrix}
         \nabla_{xx}^2 L^{SP}(x^*,\lambda^*,\mu^*) & 
         \diag(\gamma^*) & g'(x^*)^T & h'(x^*)^T & \diag(y^*) \\
         \diag(\gamma^*) & \rho I_n & 0 & 0 & \diag(x^*) \\
         J_1\Phi_g  & 0 & J_2\Phi_g & 0 & 0 \\
         h'(x^*) & 0 & 0 & 0 & 0 \\
         \diag(c_x) & \diag(c_y) & 0 & 0 & 0 
      \end{pmatrix}, 
   \]
   with $c_x, c_y$ such that
   \[ 
      ((c_x)_i,(c_y)_i) \in \begin{cases}
         \{-1, 1\} \times \{0\} \quad \text{if} \ i \in I_0(x^*), \\
         \{0\} \times \{-1,1\} \quad \text{otherwise},
      \end{cases} 
   \] 
   and arbitrary
   $(J_1 \Phi_g, J_2\Phi_g) \in \partial_B \Phi_g (x^*,\lambda^*)$.
   Define
   \[ 
      A := \begin{pmatrix}
         I_{2n + m + p} & 0  \\
         0 & \diag \big( (c_x + c_y) \circ (x^* + y^*) \big)
      \end{pmatrix}, 
   \]
   and observe that $ A $ is nonsingular. Then a simple calculation shows that 
   $ A \cdot H_C \in \partial_B T (z^*) $. Since $ A $ is nonsingular and all elements in
   $ \partial_B T(z^*) $ are nonsingular by Theorem~\ref{TKKT}, it follows that $ H_C $ is nonsingular.
   This completes the proof.
\end{proof}

\noindent
Though the third formulation using the operator $ T_C $ is mainly designed for
problems having additional nonnegativity constraints,  we can also
apply this idea also to problems without these nonnegativity constraints, 
by splitting the variables $x = x^+ - x^-$ into their positive and negative parts $x^+ x^- \geq 0$.
Since this is  a pretty standard approach also used in \cite{l0}, we skip the corresponding details.

We close with a comment regarding the choice of the NCP-function. From a purely
local point of view, the previous considerations indicate that there is, basically,
no difference between using the Fischer-Burmeister  or the minimum function.
Nevertheless, in our subsequent implementation, we prefer to use the Fischer-Burmeister
approach simply because the (generalized) partial derivatives of the minimum function have the 0-1-entries, whereas the corresponding partial derivatives of
the Fischer-Burmeister-function are usually both different from zero (unless we are in
a KKT point). This implies, in a sense, that it is more likely to generate 
singular Jacobians for the minimum-function than for the 
Fischer-Burmeister function.

\section{Numerical Results}\label{Sec:Numerics}

In this section we present some numerical results obtained by applying the previously developed Lagrange-Newton-type methods to some commonly known fields of sparse optimization problems. We start with some preliminaries regarding our implementation.

\subsection{Implementation}

\paragraph{Initial Values}

Lagrange-Newton-type methods are mainly locally
convergent approaches. Our aim is to show
these methods can be used to improve solutions obtained
by globally convergent techniques. Therefore, we pre-process the problem by first solving the $\ell_1$-surrogate problem
\[
   \min_x \ f(x) + \rho \norm{x}_1 \ST x\in X,
\]
with $f, X$ as in \ref{SparseOpt}. 
We then use the solution $x_{\ell_1}$ of the $\ell_1$-surrogate problem as initial point $x^0$ for the Lagrange-Newton-type methods, which we consider post-processing of the $\ell_1$-surrogate problem.
Accumulation points $x^*$ of our Lagrange-Newton-type methods should (hopefully) be preferable for \ref{SparseOpt} over the $\ell_1$-solution.

Note that it is, in general, not useful to have $x^0 = 0$ as the initial guess. In fact,
in cases where constraints do not exist, the initial guess $x^0=0$ does already yield an S-stationary point. The starting 
point $ x^0 = x_{\ell_1} $, obtained by the pre-preprocessing
phase, may also have many zero components, but should, nonetheless, be
a much better choice than the zero vector.
Furthermore, we found it beneficial to initialize $y^0 := e$ since we want to see a majority of $0$-entries in the accumulation point $x^*$ of the algorithm, which would correlate with a $y^*$ consisting of mainly $1$-entries. For any of the Lagrangian-multipliers $(\lambda, \mu, \gamma)$ we agreed on the canonical choice: $\lambda^0 =0$, $\mu^0 = 0$, $\gamma^0 = 0$, in the respective dimensions. Note that any choice of $\gamma^0$ might be arbitrarily bad since, for an accumulation point $x^*$ with an entry $10^{-4} \approx |x^*_i| \neq 0$, one has to expect $\gamma_i^* \approx \rho\ \text{sign}(x^*)10^4$.

\paragraph{Dealing with the B-Subdifferential}

We only consider the Fischer-Burmeister function, whenever an NCP-function is required in our computations. The method to obtain an element in the B-subdifferential of the Fischer-Burmeister function is widely known, compare~\cite{KanzowFacchinei}. We fix a point $z = (x,y,\lambda,\mu,\gamma)$ and consider the operator $T_C$ with the components:
\[
   \phi_{FB}(x_i,y_i), \ (i=1,...,n), \quad \phi_{FB}(-g_j(x), \lambda_j), \ (j = 1,...,m),
\]
and
\begin{equation*}
   I^{xy} :=\{ i \ | \ x_i = y_i = 0\}, \quad
   I^{g\lambda} :=\{ j \ | \ g_j(x) = \lambda_j = 0\}.
\end{equation*}
Define:
\[
   (x^t, y^t, \lambda^t):= (x-te(n),y-te(n),\lambda-te(p)), \quad \text{for}\ t>0,
\]
with $e = (1,1,...,1)^T$ of the appropriate dimension. Passing to the limit $t\searrow 0$ yields
\begin{equation}\label{BDiffxy}
   \lim_{t\searrow 0} \ \nabla_{(x_i,y_i)} \phi_{FB}(x_i^t,y_i^t)^T = \begin{cases} 
      \Big( \frac{x_i}{\sqrt{x_i^2 + y_i^2}} - 1,\ \frac{y_i}{\sqrt{x_i^2 + y_i^2}}-1 \Big), \quad &i \notin I^{xy},\\ 
      \Big( -\frac{1}{\sqrt{2}} - 1,\ -\frac{1}{\sqrt{2}}-1 \Big), \quad &i \in I^{xy},
   \end{cases}
\end{equation}
and by applying the mean-value theorem to $g_j$, we further have
\begin{align}
   \lim_{t\searrow 0} \ \nabla_{(x_j,\lambda_j)} &\phi_{FB}(-g_j(x^t),\lambda_j^t)^T=\notag\\
   \label{BDiffugl}
   &=\begin{cases}
      \Big( \Big( \frac{g_j(x)}{\sqrt{g_j(x)^2 + \lambda_j^2}} + 1 \Big) g_j'(x),\ \frac{\lambda_j}{\sqrt{g_j(x)^2 + \lambda_j^2}} - 1 \Big), \quad &j \notin I^{g\lambda},\\
      \Big( \Big( -\frac{\nabla g_j(x)^T e}{\sqrt{(\nabla g(x)^Te)^2 + 1}} + 1 \Big) g_j'(x),\ -\frac{1}{\sqrt{(\nabla g(x)^Te)^2 + 1}} - 1 \Big), \quad &j \in I^{g\lambda},
   \end{cases}
\end{align}
which are elements of the B-subdifferential of the Fischer-Burmeister function.

\paragraph{Termination Criterion}

The canonical condition for terminating one of our Newton-type methods with operator $T$ would be
\[
   \norm{T(x)} \le \varepsilon,
\]
with some sufficiently small tolerance $\varepsilon$.
Unfortunately we occasionally observe the problematic behavior that in some components $x^k_i \rightarrow 0$, but at the same time $y_i^k \rightarrow 0$ and $\gamma_i^k \rightarrow \infty$.
Recall that at a minimum or stationary point $(x^*,y^*)$, we should instead have $y^*_i = 1$ for all $i$ with $x^*_i = 0$.
When we observe the behavior, typically the iterates $x^k$ are nonetheless sufficiently feasible and the gradient of the Lagrangian to $L^{SP}$ is sufficiently small in every component $i$ with $x_i^k \not \approx 0$, which points to the fact that the accumulation point is S-stationary. 
We therefore terminate the algorithms, when the following check for S-stationarity is satisfied:

\begin{enumerate}[label = (S.\arabic*)]
   \item Choose tolerances $\delta\ge 0, \varepsilon \ge 0$ and define the set of nonzero components as
   \[
      I_{\neq 0}:=\{i\ |\ |x_i^k| \ge \delta \}.
   \]
   \item Set $ L := \nabla_x L^{SP}(x^k,\mu^k,\lambda^k) = \nabla f(x^k) + g'(x^k)^T \lambda^k + h'(x^k)^T \mu^k  $ and compute
   \[
       \text{res} = \norm { \begin{pmatrix} L_{I^{\neq 0}} \\ \Phi_g(x^k,\lambda^k) \\ h(x^k) \end{pmatrix}} 
       \qquad \qquad
       \left( \text{or} \quad \text{res} = \norm { \begin{pmatrix} L_{I^{\neq 0}}  \\ \Phi_g(x^k,\lambda^k) \\ h(x^k) \\ \max\{ 0, -x^k_{I^{\neq 0}}\}\end{pmatrix}} \quad \text{in case } x\ge 0 \right). 
   \]
   \item Terminate the iteration, if $\text{res} \le \varepsilon$.
\end{enumerate}

\noindent
In our application, we set $\delta = 10^{-4}$ and $\varepsilon = 10^{-6}$.

\subsection{Sparse Portfolio Selection}

The portfolio optimization problem in the sense of Markowitz \cite{Markowitz1952} can be represented as
\begin{equation}\label{Pf}
   \min_x \ \frac{1}{2} x^T Q x \ST e^T x = 1, \; \alpha^T x \ge \beta, \; x\ge 0, 
\end{equation}
where $x_i$ denotes the amount of asset $i$ bought, $\alpha_i$ is the expected payout of asset $i$ and $Q \in \RR^{n \times n}$ is the covariance matrix of all payouts.
If additionally an investor is interested in having only a few active assets, this can be formulated as a sparse optimization problem
\begin{equation}\label{spPf}
   \min_x \ \frac{1}{2} x^T Q x + \rho \norm{x}_0 \ST e^T x = 1, \; \alpha^T x \ge \beta, \; x\ge 0 .
\end{equation}
Pre-processing this sparse problem with the $\ell_1$-norm does not yield any useful result, because for all $x \ge 0$ we have $ \norm{x}_1 = e^T x $, which is constant on the feasible set of (\ref{spPf}).
Therefore, solutions of the $\ell_1$-surrogate problem for (\ref{spPf}) coincide with solutions of (\ref{Pf}).
We thus solve (\ref{Pf}) in our numerical tests to obtain an initial point $x^0$ minimizing $x^T Q x$ on the feasible set and then use Lagrange-Newton-type methods to search for a sparse value $x^*$ in its vicinity.

We ran our tests in MATLAB\footnote{\url{https://de.mathworks.com/products/matlab.html}} R2020b and used the set of portfolio selection test problems from Frangioni and Gentile\footnote{\url{http://groups.di.unipi.it/optimize/Data/MV.html}}. Note that in order to obtain the form (\ref{spPf}), we neglected the upper and lower bounds on entries $i\notin I_0(x)$ of $x$.
The initial point $x^0$ was obtained by applying the \texttt{quadprog}-function of MATLAB to problem (\ref{Pf}).
In the Lagrange-Newton-type methods, the restriction $x \ge 0$ was then only explicitly incorporated in the operator $T_C$.
For $T$ and $T_{red}$ these sign constraints were only considered in the termination criterion, but not present in the Lagrange-Newton-step. 
Nonetheless, for all test instances and all operators $T, T_{red}, T_C$ the algorithms terminated within $100$ steps in an $\varepsilon$-feasible point.

The goals was to iterate from the $x^0$ to a point, which is still sufficiently good with regards to the objective $f(x)$, but is of much higher sparsity than $x^0$.
For $\rho = 1$ and dimension $n=400$, the resulting values of $f(x) + \norm{x}_0$ for the initial value $x^0$ and the three Lagrange-Newton-type methods are given in Figure~\ref{fig:portfolio}.
The average amount of necessary iterations for each of the three methods was:
\begin{center}
   \begin{tabular}{l | l | l | l}
      & $T_C$ & $T_{RED}$ & $T$ \\
      \hline
      avg. numb. of iter. & 12,9 & 45,6 & 36,3\\
   \end{tabular}
\end{center}

\begin{figure}[htbp]
   \center
   \includegraphics[height = 0.2\textheight]{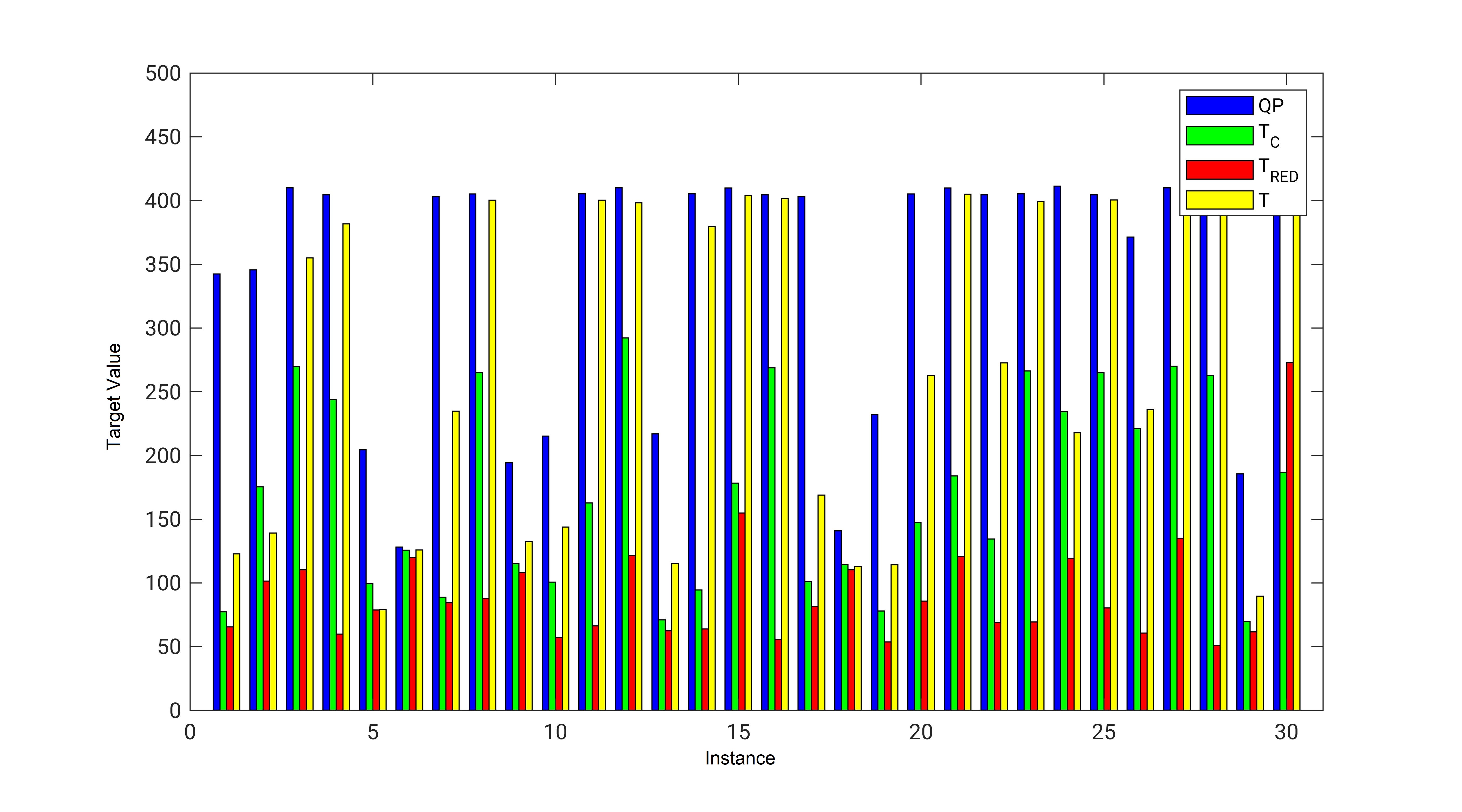}
   \caption{Target value $f(x) + \norm{x}_0$ for portfolio selection with dimension $n=400$}\label{fig:portfolio}
\end{figure}

In every instance we were able to improve on the solution QP, given by the $quadprog$-approach, with any of the solutions obtained by $T, T_{red}$ and $T_C$.
Almost always, $T_{red}$ led to the best results followed by $T_C$ and finally $T$.
However,  $T_{red}$ as well as $T$ have a much higher iteration count than anticipated for a Newton-type method, which could be caused by the difficult structure of the constraints
$ x \circ y = 0 $ and $ x \circ ( 1 - \gamma \circ x /\rho) = 0 $,
and the lack of a good initial guess $x^0$.
Only $T_C$, where complementarity between $x$ and $y$ was handled by $\phi_{FB}$, delivered a sufficiently low iteration count.

\subsection{Compressive Sensing}

In its essence, compressive sensing deals with reconstructing an $n$-dimensional vector $\overline{x}$ encoded by some sensing-matrix $A \in \RR^{m \times n}$ with $m \ll n$ into a signal $A\overline{x} = b$ of much lower dimension.
Assuming that the original signal $\overline{x}$ was sparse sparse leads to the following formulation for compressive sensing
\begin{equation}\label{CS}
   \min_x \ \norm{x}_0 \ST Ax = b, 
\end{equation}
which was studied by Tao and Candès \cite{1542412}.
Problem (\ref{CS}) can be seen as an instance of \ref{SparseOpt} with $f\equiv 0$.
Since we need some second order information for our local Newton-type methods, instead of the noise-free problem (\ref{CS}) we are more interested in the problem
\begin{equation*} 
   \min_x \ \norm{x}_0 \ST \norm{Ax - \overline{b}}_2 \le \delta,
\end{equation*}
with some tolerance $\delta > 0$.
This problem is motivated by the assumption that the received signal is $\overline{b} = b + r$ with some noise $r$.
However, this formulation requires that the noise level $\delta$ is known at least approximately.
To avoid this problem, a penalty formulation
\begin{equation}\label{CSN-penalty}
   \min_x \ \frac{1}{2} \norm{Ax - \overline{b}}^2_2 + \rho \norm{x}_0 
\end{equation}
as seen in \cite{Wang2019} is often considered instead.
Replacing the $\ell_0$-norm with the convex, sparsity inducing $\ell_1$-norm results in the \textit{basis persuit denoising} problem, presented as in \cite{Chen1998}:
\begin{equation}\label{BPD}
   \min_x \ \frac{1}{2} \norm{Ax - \overline{b}}^2_2 + \rho \norm{x}_1 .
\end{equation}
In our numerical test, we compute an initial point $x^0$ by solving the $\ell_1$-surrogate problem (\ref{BPD}) and then use $x^0$ together with the three Newton-type methods to solve (\ref{CSN-penalty}).

We set up our examples as in \cite{zhao2021lagrangenewton}:
Let $SA \in \RR^{(m+p)\times n}$ be some sensing-matrix and $\overline{x} \in \RR^n$ be some sparse vector. We set:
\[
   Sb:= SA \cdot \overline{x},
   \]
and split $SA$ and $Sb$ into:
\[
   SA = \begin{pmatrix} A \\ C \end{pmatrix}, \quad A \in \RR^{m\times n}, C \in \RR^{p \times n}, \quad 
   Sb = \begin{pmatrix} b \\ d \end{pmatrix}, \quad b \in \RR^m, d \in \RR^p.
\]
We then consider the following problem
\begin{equation}\label{CSPO}
  \min_x \ F_{\rho}(x) := \frac{1}{2} \norm{Ax - b}_2^2 + \rho \norm{x}_0 \ST Cx = d, 
\end{equation}
where the linear constraints $Cx = d$ can be considered as noise-free information and exclude $x = 0$ from the feasible set.
The dimensions were set to $n = 512, m = 128$ and $p = 8$, the sparsity of $\overline{x}$ was chosen as $s = \norm{\overline{x}}_0 = 32$. The sensing-matrix $SA$ was initialized as a Gauß-matrix as in \cite{Yin2015}, such that:
\[
   SA_j \sim \mathcal{N}(0, E_{p+m} / (p+m)), \quad \forall j =1, ..., n .
\]
Closely following \cite{zhao2021lagrangenewton}, we initialized the components of (\ref{CSPO}) as
\begin{align*}
   &\bar{x} = \texttt{zeros}(n,1),  && \Gamma = \texttt{randperm}(n),  && \bar{x}(\Gamma(1:s)) = \texttt{randn}(s,1), && Sb = SA \bar{x}, \\
   & J = \texttt{randperm}(p+m), && J_1 = J(1:m), && J_2 = J(m+1:\textit{end}), && \\
   & A = SA(J_1), && b = Sb(J_1), && C=SA(J_2), && d = Sb(J_2) .
\end{align*}
To obtain an initial guess $x^0$, we considered the $\ell_1$-problem \eqref{BPD} as a quadratic program and applied MATLAB's \texttt{quadprog} as seen in \cite{Gaines2018}, which required the split $x = x_+ - x_-$ with $x_+,x_- \geq 0$.
From the solution $(x_+^0, x_-^0)$ we could recover $x^0 = x_+^0 - x_-^0$.
Note that in order to invoke the operator $T_C$, we now have to split $x$ into positive and negative part, because otherwise we do not have any nonnegativity constraints.
For $T_C$ we thus used $(x_+^0, x_-^0)$ as is to initialize the algorithm.
Unfortunately, this split leads to a much higher computational cost for $T_C$, since in every Newton-step a system of $6n + m$ equations had to be solved.

This time we chose a discrete set $\{0.1, 0.5, 1, 2, 3, 4, 5\}$ of values for $\rho$ and ran $100$ test examples for each of those values. We were faced with some unsuccessful runs regarding $T_C$, where the algorithm failed to converge in 5.6\% of all tests, since either the iteration number exceeded the maximum of $100$ steps or we had to terminate early, as the error in the Newton-step with respect to the $\ell_2$-norm went past the safety threshold of $100$.
For all values of $\rho$, the resulting average value $f(x) + \rho \norm{x}_0$ of all successful runs is shown in Figure~\ref{fig:sensing}.
Again, we observe a significant improvement of the objective function value for all operators $T, T_{red}, T_C$, but now with less pronounced differences between the three operators.

\begin{figure}[htbp]
   \centering
   \includegraphics[height = 0.2\textheight]{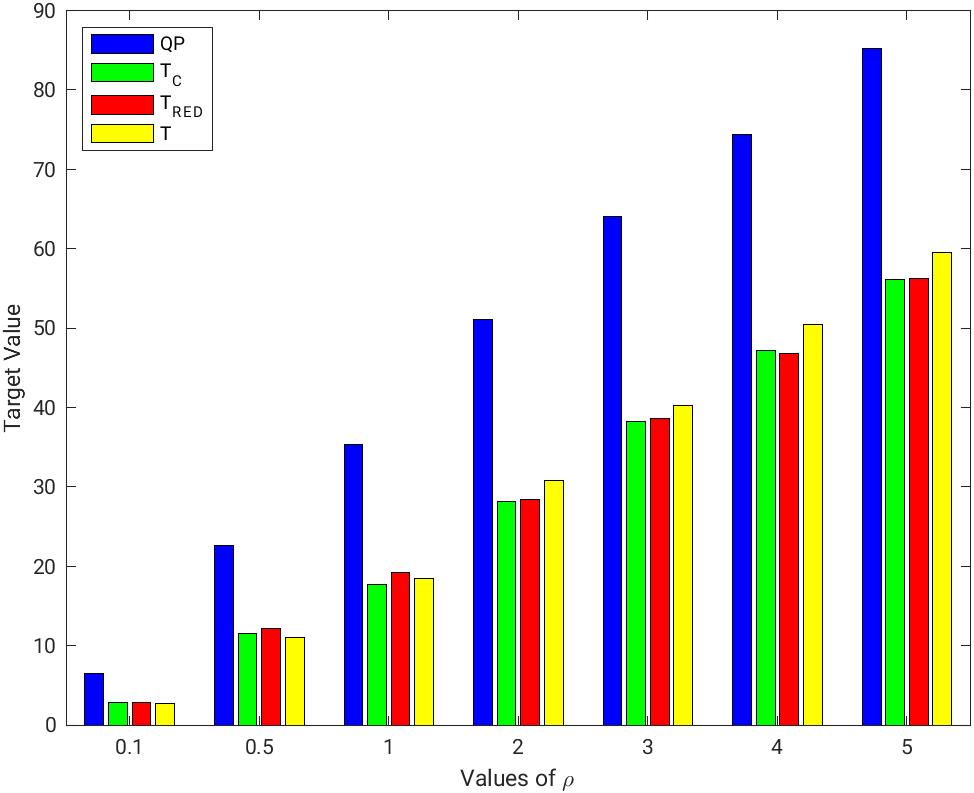}
   \caption{Average target value of $f(x) + \rho \norm{x}_0$ for successful compressive sensing runs}\label{fig:sensing}
\end{figure}

\subsection{Logistic Regression}

Consider the following sparse optimization problem
\begin{equation}\label{logit}
   \min_w \  \sum_{i=1}^m \log(1 + \exp(-y_i \cdot w^T x_i)) + \rho \norm{w}_0,
\end{equation}
which we refer to as the penalized maximum log-likelihood function.
This estimator is applied to match a sigmoid-function to a set of measurements $x_1,...,x_m$ and corresponding Bernoulli-variables $y_1,...,y_n \in \{-1,1\}^m$, where additionally sparsity is promoted in the parameters $w_i$.
Replacing $\norm{\cdot}_0$ by $\norm{\cdot}_1$ in (\ref{logit}), we obtain a  convex composite optimization problem, which can be tackled by FISTA or proximal BFGS methods, compare \cite{2014}.

In our numerical test, we consider the problem \textit{gisette} from the NIPS 2003 feature selection challenge, which
was acquired from the \textit{LIBSVM}-website\footnote{\url{https://www.csie.ntu.edu.tw/~cjlin/libsvmtools/datasets/}}.
The classification problem is high-dimensional $(n = 5000, m = 6000)$ and was scaled to $[-1,1]$.
Recall that applying either of the Newton-type methods with $T_C, T$ or $T_{red}$ to the \textit{gisette} problem leads to a drastic increase in the dimensionality (in the case of $T_C$: $n = 30000$).
Computation was therefore outsourced to a faster PC and handled in \textit{Python}.

We computed an initial point $x^0$ by solving the $\ell_1$-surrogate problem to (37) with FISTA. Running the three Newton-type methods with this initial point then lead to the results in Figure~\ref{fig:regression}. As one can see, all three of the operators lead to an improved sparsity $\|x\|_0$ and an improved function value $f(x) + \|x\|_0$, meaning a better solution of the original problem (\ref{logit}).

\begin{figure}[htbp]
   \centering
   \includegraphics[height = 0.2\textheight]{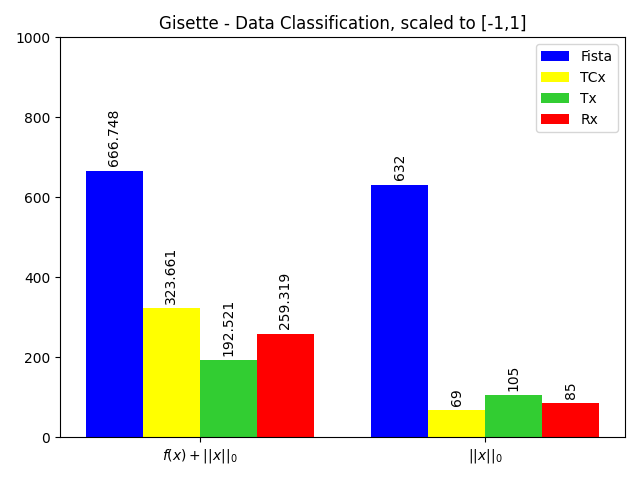}
   \caption{Comparison of target value $f(x) + \|x\|_0$ and sparsity $\|x\|_0$ for logistic regression}\label{fig:regression}
\end{figure}

\section{Final Remarks}\label{Sec:Final}

The aim of this paper was mainly to lay the theoretical foundation fpr two reformulations of the highly difficult sparse optimization problem SPO.
In particular, it was shown that we get full equivalence of problem SPO with these
two reformulations in terms of global and local minima.
Moreover, the corresponding stationary conditions also
coincide and corresponding second-order conditions are closely related.
These results can be used to develop and investigate Lagrange-Newton-type methods for the numerical solution of problem SPO and the numerical results indicate that one can use these methods in order to get significant improvements of solutions obtained by some other techniques.

The Lagrange-Newton-type methods, of course, are local
in nature, but result quite naturally as a direct 
consequence of our theoretical considerations. Our
future research, however, will concentrate on 
the development of globally convergent methods based
on our reformulations. Some preliminary results in
this direction can already be found in \cite{PhD-ThesisRaharja}.


\bibliographystyle{abbrv} 
\bibliography{Quellen} 
\end{document}